\newtheorem{theo}{Theorem}[section]
\newtheorem{lemm}[theo]{Lemma}
\numberwithin{equation}{section}
\begin{document}

\title[inverse elastic surface scattering]{Inverse elastic surface scattering
with far-field data}

\author{Huai-An Diao}
\address{School of Mathematics and Statistics, Northeast Normal University,
Changchun, Jilin 130024, China.}
\email{hadiao@nenu.edu.cn}

\author{Peijun Li}
\address{Department of Mathematics, Purdue University, West Lafayette, Indiana
47907, USA.}
\email{lipeijun@math.purdue.edu}

\author{Xiaokai Yuan}
\address{Department of Mathematics, Purdue University, West Lafayette, Indiana
47907, USA.}
\email{yuan170@math.purdue.edu}

\thanks{The research of H.-A.  Diao was supported in part by the Fundamental
Research Funds for the Central Universities under the grant 2412017FZ007. The
research of P. Li was supported in part by the NSF grant DMS-1151308.}

\subjclass[2010]{78A46, 65N21}

\keywords{Inverse scattering, elastic wave equation, near-field imaging,
super-resolution.}

\begin{abstract}
A rigorous mathematical model and an efficient computational method are proposed
to solving the inverse elastic surface scattering problem which arises from the
near-field imaging of periodic structures. We demonstrate how an enhanced
resolution can be achieved by using more easily measurable far-field data. The
surface is assumed to be a small and smooth perturbation of an elastically rigid
plane. By placing a rectangular slab of a homogeneous and isotropic elastic
medium with larger mass density above the surface, more propagating wave modes
can be utilized from the far-field data which contributes to the reconstruction
resolution. Requiring only a single illumination, the method begins with the
far-to-near (FtN) field data conversion and utilizes the transformed field
expansion to derive an analytic solution for the direct problem, which leads to
an explicit inversion formula for the inverse problem. Moreover, a nonlinear
correction scheme is developed to improve the accuracy of the reconstruction.
Results show that the proposed method is capable of stably reconstructing
surfaces with resolution controlled by the slab's density.
\end{abstract}

\maketitle

\section{Introduction}

Scattering problems have been studied extensively in the past decades
\cite{coltonbook13}. They have many significant applications in many science and
engineering areas such as radar and sonar, medical imaging, and remote sensing.
Especially, the elastic wave scattering problems have practical applications
in geophysics, seismology, and nondestructive testing
\cite{carlosammari01,ak04,aknt02,bc05ip}. There are two kinds of problems: the
direct scattering problems are to determine the wave field from the
differential equations governing the wave motion; the inverse scattering
problems are to determine the unknown medium, such as the
geometry or material, from the measurement of the wave field. In this paper we
focus on the inverse elastic scattering problem in periodic structures. The
direct elastic scattering problem has been studied by many researchers
\cite{arens99, arens99integral, eh10mm, eh12m3}. The uniqueness result of the
inverse problem can be found in \cite{cgk02ip}. The numerical study can be
found in \cite{eh12cp} and \cite{hlz13ip} for the inverse problem by using an
optimization method and the factorization method, respectively.

It is known that there is a resolution limit to the sharpness of the details
which can be observed from conventional far-field optical microscopy, one half
the wavelength, referred to as the Rayleigh criterion or the diffraction
limit \cite{Courjon03}. The loss of resolution is mainly due to the ignorance of
the evanescent wave components. Near-field optical imaging is an effective
approach to obtain images with subwavelength resolution. The inverse
scattering problems via the near-field imaging for acoustic and electromagnetic
waves have been undergoing extensive studies for impenetrable infinite rough
surfaces \cite{bl13am}, penetrable infinite rough surfaces \cite{bl14is}, two-
and three-dimensional diffraction gratings \cite{bao14, bl14ip, Cheng13,
jl17ip}, bounded obstacles \cite{lw15ipi}, and interior cavities \cite{lw15cp}.
The two- and three-dimensional inverse elastic surface scattering problems have
been investigated by using near-field data in \cite{lwz-ip15, lwz16aa,
lwz-jcp16}. However, there exits some difficulties of near-field optical
imaging in practice, for example, it requires a sophisticated control of the
probe when scanning samples to measure the near-field data. Recently, a
rigorous mathematical model and an efficient numerical method are proposed in 
\cite{blw-aml16} to over the aforementioned obstacle in near-field imaging. The
novel idea is to put a rectangular slab of larger index of refraction above the
surfaces and allow more propagating wave modes to be able to propagate to the
far-field regime. This work is devoted to the inverse elastic surface
scattering problem with far-field data. We point out that this is a nontrivial
extension of the method from solving the inverse acoustic surface scattering
problem to solving the inverse elastic surface scattering problem, because the
latter involves the more complicated elastic wave equation due to the
coexistence of compressional and shear waves propagating at different speeds.

In this paper, we develop a rigorous mathematical model and an efficient
numerical method for the inverse elastic surface scattering with far-field
data. The scattering surface is assumed to be a small and smooth perturbation of
an elastically rigid plane. A rectangular slab of homogeneous and isotropic
elastic medium is placed above the scattering surface. The slab has a larger
mass density than that of the free space, and has a wavelength comparable
thickness. The measurement can be took on the top face of the slab, which is in
the far-field regime. The method makes use of the Helmholtz decomposition to
consider two coupled Helmholtz equations instead of the elastic wave equation.
It consists of two steps. The first step is to do the far-to-near (FtN) field
data conversion, which requires to solve a Cauchy problem of the Helmholtz
equation in the slab. Using the Fourier analysis, we compute the analytic
solution and find a formula connecting the wave fields on the top and bottom
faces of the slab: a larger mass density of the slab allows more propagating
wave modes to be converted stably from the far-field regime to the near-field
regime. The second step is to solve an inverse surface scattering problem in the
near-field zone by using the data obtained from the first step. Combining the
Fourier analysis, we use the transformed field expansions to find an analytic
solution for the direct problem. We refer to \cite{br93, mn11joasa, nr04,
nr04josaa, ls12} for the transformed field expansion and related boundary
perturbation methods for solving direct surface scattering problems. Using the
closed form of the analytic solution, we deduce expressions for the leading and
linear terms of the power series solution. Dropping all higher order terms, we
linearize the inverse problem and obtain explicit reconstruction formulas for
the surface function. Moreover, a nonlinear correction scheme is also developed
to improve the reconstruction. The method requires only a single illumination
and is implemented efficiently by the fast Fourier transform (FFT). Numerical
examples show it is effective and robust to reconstruct the scattering surfaces
with subwavelength resolution.

The remaining part of the paper is organized as follows.  The mathematical
model problem is formulated in Section \ref{se:model}. Sections \ref{se:hd} and
\ref{se:tbc} introduce the Helmholtz decomposition and the transparent
boundary condition, respectively. In Section \ref{sec:sd}, we show how to
convert the measured elastic wave data into the scattering data of the scalar
potentials introduced from the Helmholtz decomposition. In Section \ref{sec:rp},
a reduced problem is modeled in the slab and the analytic solution is
obtained to accomplish the FtN field data conversion. In Section \ref{sec:tfe},
the transformed field expansion and corresponding recursive boundary value
problems are presented. We give the reconstruction formulas for the inverse
problem in Section \ref{sec:ip}. Numerical experiments are presented in Section
\ref{sec:ne} to demonstrate the effectiveness of the proposed method. Finally,
we conclude some general remarks and directions for future research in Section
\ref{sec:con}.

\section{Model problem}\label{se:model}

Let us first introduce the problem geometry, which is shown in Figure
\ref{pg}. Consider an elastically rigid surface $\Gamma_f=\{\boldsymbol
x=(x, y)\in\mathbb R^2: y=f(x),\,0<x<\Lambda\}$, where $f$ is a periodic
Lipschitz continuous function with period $\Lambda$. The scattering surface
function $f$ is assumed to have the form
\begin{equation}\label{f}
 f(x)=\varepsilon g(x),
\end{equation}
where $\varepsilon>0$ is a sufficiently small constant and is called the surface
deformation parameter, $g$ is the surface profile function which is also
periodic with the period $\Lambda$. Hence the surface $\Gamma_f$ is a
small perturbation of the planar surface $\Gamma_0=\{\boldsymbol x\in\mathbb
R^2: y=0,\,0<x<\Lambda\}.$
Let a rectangular slab of homogeneous and isotropic elastic medium be placed
above the scattering surface. The bottom face of the slab is $
\Gamma_b=\{\boldsymbol x\in\mathbb R^2: y=b,\,0<x<\Lambda\},$
where $b>\max_{x\in (0, \Lambda)}f(x)$ is a constant and stands for the
separation distance between the scattering surface and the slab. The top face of
the slab is $\Gamma_a=\{\boldsymbol x\in\mathbb R^2: y=a,\,0<x<\Lambda\},$
where $a>b$ is a positive constant and stands for the measurement
distance. Denote by $\Omega$ the bounded domain between $\Gamma_f$ and
$\Gamma_b$, i.e., $\Omega=\{\boldsymbol x\in\mathbb R^2: f<y<b,\,0<x<\Lambda\}.$
Let $R$ be the domain of the slab, i.e., $R=\{\boldsymbol x\in\mathbb R^2:
b<y<a,\,0<x<\Lambda\}.$ Finally, denote by $U$ the open domain above $\Gamma_a$,
i.e., $U=\{\boldsymbol x\in\mathbb R^2: y>a,\,0<x<\Lambda\}.$

\begin{figure}
\centering
\includegraphics[width=0.35\textwidth]{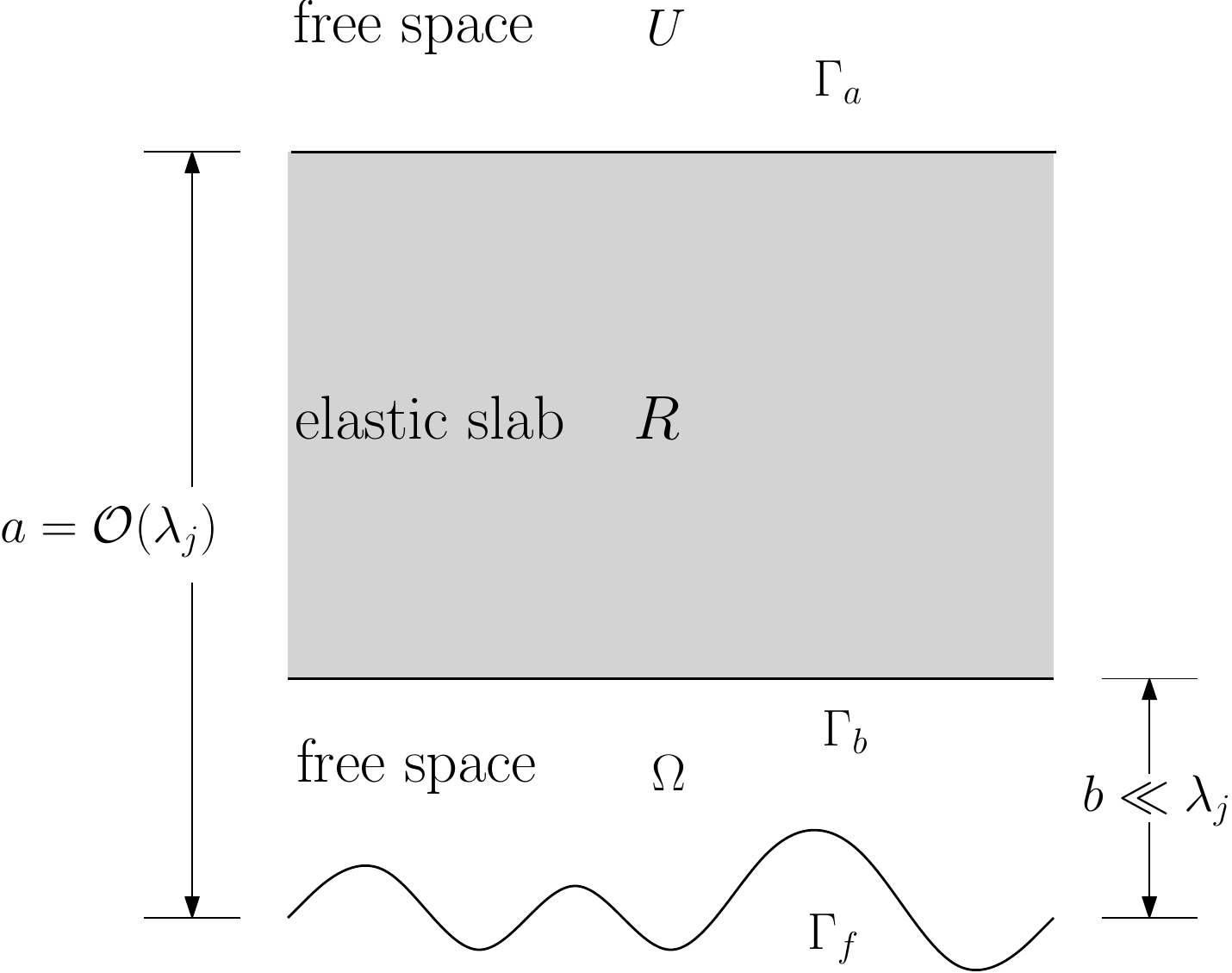}
\caption{The problem geometry. }
\label{pg}
\end{figure}

In this paper, we assume for simplicity that the Lam\'{e} parameters $\mu,
\lambda$ are constants satisfying $\mu>0, \lambda+\mu>0$; the mass density
$\rho$ is a piecewise constant, i.e.,
\[
 \rho(\boldsymbol x)=\begin{cases}
       \rho_0,\quad\boldsymbol x\in\Omega\cup U,\\
       \rho_1,\quad\boldsymbol x\in R,
      \end{cases}
\]
where $\rho_0$ and $\rho_1$ are the density of the free space and the elastic
slab, respectively, and they satisfy $\rho_1>\rho_0>0$. Define
\[
 \kappa_1=\omega \left(\frac{\rho_0}{\lambda+2\mu}\right)^{1/2},\quad
\kappa_2=\omega\left(\frac{\rho_0}{\mu}\right)^{1/2},
\]
which are known as the compressional wavenumber and the shear wavenumber in the
free space, respectively. We comment that the method also works for the case
where $\mu, \lambda$ take different values in the free space and the elastic
slab. Let $\lambda_j=2\pi/\kappa_j, j=1, 2$ be the
corresponding wavelength of the compressional and shear waves.

Let $\boldsymbol u^{\rm inc}$ be a time-harmonic plane wave which is incident
on the slab from above. The incident plane wave can be taken as either the
compressional wave $\boldsymbol u^{\rm inc}(\boldsymbol x)=\boldsymbol d e^{{\rm
i}\kappa_1\boldsymbol x\cdot\boldsymbol d}$ or the shear wave $\boldsymbol
u^{\rm inc}=\boldsymbol d^\perp e^{{\rm i}\kappa_2 \boldsymbol
x\cdot\boldsymbol d}$, where $\boldsymbol d=(\sin\theta, -\cos\theta)^\top$ is
the unit incident direction vector, $\theta\in (-\pi/2, \pi/2)$ is
the incident angle, and $\boldsymbol d^\perp=(\cos\theta, \sin\theta)^\top$ is
an orthonormal vector to $\boldsymbol d$. In this work, we use the compressional
incident plane wave as an example to present the results, which are similar and
can be obtained with obvious modifications for the shear incident plane
wave. Practically, the simplest configuration is the normal
incidence for experiments, i.e., $\theta=0$. Hence we focus on the
normal incidence since our method requires only a single illumination. Under the
normal incidence, the incident field reduces to
\begin{equation}\label{if}
 \boldsymbol u^{\rm inc}(\boldsymbol x)=(0, -1)^\top e^{-{\rm i}\kappa_1 y}.
\end{equation}
It can be verified that the incident field $\boldsymbol u^{\rm
inc}$ satisfies the elastic wave equation:
\begin{equation}\label{ui}
 \mu\Delta\boldsymbol u^{\rm inc}+(\lambda+\mu)\nabla\nabla\cdot\boldsymbol
u^{\rm inc}+\omega^2\rho_0\boldsymbol u^{\rm inc}=0\quad\text{in} ~ U.
\end{equation}

A transmission problem can be formulated due to the interaction between the
elastic wave and the interfaces $\Gamma_a$ and $\Gamma_b$. Let $\boldsymbol u,
\boldsymbol v, \boldsymbol w$ be the displacements of the total field
in the domains $U, R, \Omega$, respectively. They satisfy the elastic wave
equations:
\begin{subequations}\label{tf}
\begin{align}
\label{tfu} \mu\Delta\boldsymbol u+(\lambda+\mu)\nabla\nabla\cdot\boldsymbol
u+\omega^2\rho_0\boldsymbol u=0 &\quad\text{in} ~ U,\\
\label{tfv} \mu\Delta\boldsymbol v+(\lambda+\mu)\nabla\nabla\cdot\boldsymbol
v+\omega^2\rho_1\boldsymbol v=0 &\quad\text{in} ~ R,\\
\label{tfw}\mu\Delta\boldsymbol w+(\lambda+\mu)\nabla\nabla\cdot\boldsymbol
w+\omega^2\rho_0\boldsymbol w=0 &\quad\text{in} ~ \Omega.
\end{align}
\end{subequations}
In addition, the total fields are connected by the continuity conditions:
\begin{subequations}
\begin{align}
\label{cca}\boldsymbol u=\boldsymbol v, &\quad \mu\partial_y \boldsymbol
u+(\lambda+\mu)(0, 1)^\top\nabla\cdot\boldsymbol u=\mu\partial_y\boldsymbol
v+(\lambda+\mu)(0, 1)^\top\nabla\cdot\boldsymbol v \quad\text{on} ~
\Gamma_a,\\
\label{ccb}\boldsymbol v=\boldsymbol w, &\quad \mu\partial_y\boldsymbol
v+(\lambda+\mu)(0, 1)^\top \nabla\cdot\boldsymbol v=\mu\partial_y\boldsymbol
w+(\lambda+\mu)(0, 1)^\top\nabla\cdot\boldsymbol w \quad\text{on} ~
\Gamma_b.
\end{align}
\end{subequations}
Since $\Gamma_f$ is elastically rigid, we have the homogeneous Dirichlet
boundary condition:
\begin{equation}\label{dbc}
 \boldsymbol w=0\quad\text{on} ~\Gamma_f.
\end{equation}
In the open domain $U$, the total field $\boldsymbol u$ consists of the incident
field $\boldsymbol u^{\rm inc}$ and the diffracted field $\boldsymbol u^{\rm
d}$:
\begin{equation}\label{ut}
 \boldsymbol u=\boldsymbol u^{\rm inc}+\boldsymbol u^{\rm d},
\end{equation}
where $\boldsymbol u^{\rm d}$ is required to satisfy the bounded outgoing wave
condition.

Throughout, we assume that the measurement distance $a=\mathcal{O}(\lambda_j)$
and the separation distance $b\ll\lambda_j$, i.e., $a$ is comparable with the
wavelength and $\Gamma_a$ is put in the far-field region; $b$ is much smaller
than the wavelength and $\Gamma_b$ is put in the near-field region. Now we are
ready to formulate the inverse problem: Given the incident field $\boldsymbol
u^{\rm inc}$, the inverse problem is to determine the scattering surface $f$
from the far-field measurement of the total field $\boldsymbol u$ on $\Gamma_a$.

\section{The Helmholtz decomposition}\label{se:hd}

In this section, we introduce the Helmholtz decomposition for the total fields
by using scalar potential functions, and deduce the continuity conditions for
these scalar fields. Let $\boldsymbol u=(u_1, u_2)^\top$ and $u$ be a vector and
a scalar function, respectively. Introduce the scalar and vector curl
operators:
\[
 {\rm curl}\boldsymbol u=\partial_x u_2-\partial_y u_1, \quad {\bf
curl}u=(\partial_y u, -\partial_x u)^\top.
\]

For any solution $\boldsymbol u=(u_1, u_2)^\top$ of  \eqref{tfu}, the Helmholtz
decomposition reads
\begin{equation}\label{hdu}
 \boldsymbol u=\nabla\phi_1 +{\bf curl}\phi_2,
\end{equation}
where $\phi_j, j=1, 2$ are two scalar potential functions. Explicitly, we have
\begin{equation}\label{u}
 u_1=\partial_x \phi_1+\partial_y \phi_2, \quad u_2=\partial_y\phi_1
-\partial_x \phi_2.
\end{equation}
Substituting \eqref{hdu} into \eqref{tfu}  yields
\[
 \nabla\left((\lambda+2\mu)\Delta\phi_1+\omega^2\rho_0\phi_1
\right)+{\bf curl}\left(\mu\Delta\phi_2+\omega^2\rho_0\phi_2
\right)=0,
\]
which is fulfilled if $\phi_j$ satisfies
\begin{equation}\label{heu}
 \Delta\phi_j+\kappa^2_j\phi_j=0\quad\text{in} ~ U.
\end{equation}
Combining \eqref{heu} and \eqref{hdu}, we obtain
\[
 \phi_1=-\frac{1}{\kappa_1^2}\nabla\cdot\boldsymbol
u,\quad \phi_2=\frac{1}{\kappa_2^2} {\rm curl}\boldsymbol u,
\]
which give
\begin{equation}\label{phi}
 \partial_x u_1+\partial_y u_2=-\kappa_1^2\phi_1,\quad \partial_x
u_2-\partial_y u_1=\kappa_2^2\phi_2.
\end{equation}

For any solution $\boldsymbol v=(v_1, v_2)^\top$ of \eqref{tfv}, we introduce
the Helmholtz decomposition by using scalar functions $\psi_j$:
\begin{equation}\label{hdv}
 \boldsymbol v=\nabla\psi_1 +{\bf curl}\psi_2,
\end{equation}
which gives explicitly that
\begin{equation}\label{v}
 v_1=\partial_x\psi_1+\partial_y \psi_2,\quad v_2=\partial_y\psi_1
-\partial_x\psi_2.
\end{equation}
Plugging \eqref{hdv} into \eqref{tfv}, we may have
\begin{equation}\label{hev}
 \Delta\psi_j+\eta_j^2\psi_j=0\quad\text{in} ~ R,
\end{equation}
where $\eta_1$ and $\eta_2$ are the compressional and shear wavenumbers in the
elastic slab, respectively, and are given by
\begin{equation}\label{eq:eta}
	\eta_1=\omega\left(\frac{\rho_1}{\lambda+2\mu}\right)^{1/2},\quad
\eta_2=\omega\left(\frac{\rho_1}{\mu}\right)^{1/2}.
\end{equation}
Combing \eqref{hev} and \eqref{hdv}, we get
\[
 \psi_1=-\frac{1}{\eta_1^2}\nabla\cdot\boldsymbol v,\quad
\psi_2=\frac{1}{\eta_2^2}{\rm curl}\boldsymbol v,
\]
which give
\begin{equation}\label{psi}
 \partial_x v_1+\partial_y v_2=-\eta_1^2\psi_1,\quad \partial_x
v_2-\partial_y v_1=\eta_2^2\psi_2.
\end{equation}

Since $\Gamma_a$ is a horizontal line, it is easy to verify from the continuity
condition \eqref{cca} that
\begin{equation}\label{ccuv}
 u_j=v_j, \quad \partial_y u_j = \partial_y v_j.
\end{equation}
Using \eqref{phi}, \eqref{psi}--\eqref{ccuv}, we deduce the first
continuity condition for the scalar potentials on $\Gamma_a$:
\begin{equation}\label{cca1}
 \kappa_j^2\phi_j=\eta_j^2\psi_j.
\end{equation}
It follows from \eqref{u}, \eqref{v}, and \eqref{ccuv} that we deduce the
second continuity condition for the scalar potentials on $\Gamma_a$:
\begin{equation}\label{cca2}
 \partial_y\phi_1-\partial_x\phi_2=\partial_y\psi_1-\partial_x\psi_2, \quad
\partial_y\phi_2+\partial_x\phi_1=\partial_y\psi_2+\partial_x\psi_1.
\end{equation}

Similarly, for any solution $\boldsymbol w=(w_1, w_2)^\top$ of \eqref{tfw}, the
Helmholtz decomposition is
\begin{equation}\label{hdw}
 \boldsymbol w=\nabla\varphi_1+{\bf curl}\varphi_2.
\end{equation}
Substituting \eqref{hdw} into \eqref{tfw}, we may get
\begin{equation*}
 \Delta\varphi_j+\kappa_j^2\varphi_j=0\quad\text{in} ~ \Omega.
\end{equation*}
Noting \eqref{ccb}, we may repeat the same steps and obtain the continuity
conditions on $\Gamma_b$:
\begin{equation}\label{ccb1}
 \eta_j^2\psi_j=\kappa_j^2\varphi_j
\end{equation}
and
\begin{equation}\label{ccb2}
 \partial_y\psi_1-\partial_x\psi_2=
\partial_y\varphi_1-\partial_x\varphi_2,\quad
\partial_y\psi_2+\partial_x\psi_1=\partial_y\varphi_2+\partial_x\varphi_1.
\end{equation}
Finally, it follows from the boundary condition \eqref{dbc} and the Helmholtz
decomposition \eqref{hdw} that
\begin{equation}\label{hdbc}
 \partial_x\varphi_1+\partial_y\varphi_2=0,\quad
\partial_y\varphi_1-\partial_x\varphi_2=0\quad\text{on} ~ \Gamma_f.
\end{equation}

\section{Transparent boundary condition}\label{se:tbc}

It follows from \eqref{ui}, \eqref{tfu}, and \eqref{ut} that the diffracted
field $\boldsymbol u^{\rm d}$ also satisfies the elastic wave equation:
\begin{equation}\label{ud}
  \mu\Delta\boldsymbol u^{\rm d}+(\lambda+\mu)\nabla\nabla\cdot\boldsymbol
u^{\rm d}+\omega^2\rho_0\boldsymbol u^{\rm d}=0 \quad\text{in} ~ U.
\end{equation}
Introduce the Helmholtz decomposition for the diffracted field $\boldsymbol
u^{\rm d}$:
\begin{equation}\label{hdud}
 \boldsymbol u^{\rm d}=\nabla\phi^{\rm d}_1+{\bf curl}\phi^{\rm d}_2,
\end{equation}
Substituting \eqref{hdud} into \eqref{ud} may yield
\begin{equation}\label{he}
 \Delta\phi^{\rm d}_j+\kappa^2_j\phi^{\rm d}_j=0\quad\text{in} ~ U.
\end{equation}

It follows from the uniqueness of the solution for the direct problem that
$\phi^{\rm d}_j$ is a periodic function with period $\Lambda$ and admits the
Fourier series expansion:
\begin{equation}\label{fse}
 \phi^{\rm d}_j(x, y)=\sum_{n\in\mathbb Z}\phi^{\rm d}_{jn}(y)e^{{\rm i}\alpha_n
x},
\end{equation}
where $\alpha_n=2n\pi/\Lambda$. Plugging \eqref{fse} into \eqref{he} yields
\begin{equation}\label{ode}
\partial^2_{yy} \phi^{\rm d}_{jn}(y)+\beta_{jn}^2
\phi^{\rm d}_{jn}(y)=0, \quad y>a,
\end{equation}
where
\[
 \beta_{jn}=\begin{cases}
                (\kappa_j^2-\alpha_n^2)^{1/2},&\quad |\alpha_n|<\kappa_j,\\
                {\rm i}(\alpha_n^2-\kappa_j^2)^{1/2},&\quad |\alpha_n|>\kappa_j.
               \end{cases}
\]
Here we assume that $\beta_{jn}\neq 0$ to exclude possible resonance.

Using the bounded outgoing wave condition, we may solve \eqref{ode} analytically
and obtain the solution of \eqref{he} explicitly:
\begin{equation}\label{re}
 \phi^{\rm d}_j(x, y)=\sum_{n\in\mathbb Z}\phi^{\rm d}_{jn}(a) e^{{\rm
i}(\alpha_n x+\beta_{jn}(y-a))},
\end{equation}
which is called the Rayleigh expansion for the scalar potential function
$\phi^{\rm d}_j$. Taking the normal derivative of \eqref{re} on $\Gamma_a$ gives
\begin{equation}\label{nd}
\partial_y\phi^{\rm d}_j(x, a)=\sum_{n\in\mathbb Z}{\rm
i}\beta_{jn}\phi^{\rm d}_{jn}(a)e^{{\rm i}\alpha_n x}.
\end{equation}

For a given periodic function $u(x)$ with period $\Lambda$, it has the Fourier
series expansion:
\[
 u(x)=\sum_{n\in\mathbb Z}u_n e^{{\rm i}\alpha_n x},\quad
u_n=\frac{1}{\Lambda}\int_0^\Lambda u(x)e^{-{\rm i}\alpha_n x}{\rm d}x.
\]
We define the boundary operator:
\[
 (\mathscr T_j u)(x)=\sum_{n\in\mathbb Z}{\rm i}\beta_{jn}u_n e^{{\rm
i}\alpha_n
x}.
\]
It is easy to verify from \eqref{nd} that
\begin{equation}\label{tbcj}
 \partial_y \phi^{\rm d}_j=\mathscr T_j\phi^{\rm d}_j\quad\text{on} ~ \Gamma_a.
\end{equation}

Recalling the incident field \eqref{if}, we may also consider the Helmholtz
decomposition for the incident field:
\begin{equation}\label{hdui}
 \boldsymbol u^{\rm inc}=\nabla\phi_1^{\rm inc}+{\bf curl}\phi_2^{\rm inc},
\end{equation}
which gives
\[
\phi_1^{\rm inc}=-\frac{1}{\kappa_1^2}\nabla\cdot\boldsymbol u^{\rm
inc}=-\frac{\rm i}{\kappa_1}e^{-{\rm i}\kappa_1 y}, \quad \phi_2^{\rm
inc}=\frac{1}{\kappa_2^2}{\rm curl}\boldsymbol u^{\rm inc}=0.
\]
A simple calculation yields
\[
 \partial_y\phi_1^{\rm inc}=-e^{-{\rm i}\kappa_1 a}, \quad
\mathscr T_1\phi_1^{\rm inc}=e^{-{\rm i}\kappa_1 a},
\]
which gives
\begin{equation}\label{tbcif}
 \partial_y\phi_1^{\rm inc}= \mathscr T_1\phi_1^{\rm inc}+g_1, \quad \partial_y
\phi_2^{\rm inc}=\mathscr T_2\phi_2^{\rm inc}+g_2.
\end{equation}
Here $g_1=-2e^{-{\rm i}\kappa_1 a}$ and $g_2=0$.

Letting $\phi_j=\phi_j^{\rm inc}+\phi^{\rm d}_j$ and recalling $\boldsymbol
u=\boldsymbol u^{\rm inc}+\boldsymbol u^{\rm d}$, we get \eqref{hdu} by adding
\eqref{hdui} and \eqref{hdud}. Moreover, we obtain the transparent boundary
condition for the total scalar potentials by combing \eqref{tbcj} and
\eqref{tbcif}:
\begin{equation}\label{tbctj}
 \partial_y \phi_j=\mathscr T_j\phi_j + g_j\quad\text{on} ~ \Gamma_a.
\end{equation}
It follows from \eqref{cca1}--\eqref{cca2} that
\begin{align}\label{tbca}
\partial_y\phi_1&=\partial_y\psi_1-\partial_x\psi_2+\partial_x\phi_2=\partial_y
\psi_1-\partial_x\psi_2+\left(\frac{\eta_2^2}{\kappa_2^2}
\right)\partial_x\psi_2\notag\\
&=\partial_y\psi_1+\left(\frac{\eta_2^2-\kappa_2^2}{
\kappa_2^2} \right)\partial_x\psi_2,\notag \\
\partial_y\phi_2&=\partial_y\psi_2+\partial_x\psi_1-\partial_x\phi_1=\partial_y
\psi_2+\partial_x\psi_1-\left(\frac{\eta_1^2}{\kappa_1^2}
\right)\partial_x\psi_1\notag\\
&=\partial_y\psi_2-\left(\frac{\eta_1^2-\kappa_1^2}{
\kappa_1^2}\right)\partial_x\psi_1.
\end{align}
Combining \eqref{tbctj}--\eqref{tbca} and \eqref{cca1} yields the boundary
condition for $\psi_j$ on $\Gamma_a$:
\begin{align}\label{tbcpj}
\partial_y\psi_1+\left(\frac{\eta_2^2-\kappa_2^2}{\kappa_2^2}
\right)\partial_x\psi_2&=\left(\frac{\eta_1^2}{\kappa_1^2}
\right)\mathscr T_1\psi_1+g_1 , \nonumber
\\
\partial_y\psi_2-\left(\frac{\eta_1^2-\kappa_1^2}{\kappa_1^2}
\right)\partial_x\psi_1&=\left(\frac
{\eta_2^2}{\kappa_2^2}\right)\mathscr T_2\psi_2+g_2.
\end{align}

Let $\boldsymbol u$ be a periodic function of $x$ with period $\Lambda$. It
admits the Fourier series expansion:
\[
 \boldsymbol u(x)=\sum_{n\in\mathbb Z}\boldsymbol u_n e^{{\rm
i}\alpha_n x},\quad \boldsymbol u_n=\frac{1}{\Lambda}\int_0^\Lambda
\boldsymbol u(x) e^{-{\rm i}\alpha_n x}{\rm d}x.
\]
Define the boundary operator on $\Gamma_a$:
\[
 (\mathscr T\boldsymbol u)(x)=\sum_{n\in\mathbb Z}{\rm i}
 \begin{bmatrix}
  \frac{\omega^2\beta_{1n}}{\alpha_n^2+\beta_{1n}\beta_{2n}} &
\mu\alpha_n -\frac{\omega^2\alpha_n^2}{\alpha_n^2+\beta_{1n}\beta_{2n}}\\[5pt]
\frac{\omega^2\alpha_n^2}{\alpha_n^2+\beta_{1n}\beta_{2n}}-\mu\alpha_n
&  \frac{\omega^2\beta_{2n}}{\alpha_n^2+\beta_{1n}\beta_{2n}}
 \end{bmatrix}\boldsymbol u_n e^{{\rm i}\alpha_n x}.
\]
It is shown in \cite{lwz-ip15} that $\alpha_n^2+\beta_{1n}\beta_{2n}\neq
0$ for $n\in\mathbb Z$ and the diffracted field $\boldsymbol u^{\rm d}$
satisfies the transparent boundary condition:
\[
 \mu\partial_y\boldsymbol u^{\rm d}+(\lambda+\mu)(0, 1)^\top
\nabla\cdot\boldsymbol u^{\rm d}=\mathscr T\boldsymbol u^{\rm d}\quad\text{on}~
\Gamma_a.
\]
A simple calculation yields that
\[
 \mu\partial_y \boldsymbol u^{\rm inc}+(\lambda+\mu)(0,
1)^\top\nabla\cdot\boldsymbol u^{\rm inc}={\rm i}\kappa_1(\lambda+2\mu)(0,
1)^\top e^{-{\rm i}\kappa_1 a}
\]
and
\[
 \mathscr T\boldsymbol u^{\rm inc}=-{\rm i}\kappa_1(\lambda+2\mu)(0, 1)^\top
e^{-{\rm i}\kappa_1 a}.
\]
Hence we obtain the boundary condition for the total displacement field
$\boldsymbol u$:
\[
 \mu\partial_y\boldsymbol u+(\lambda+\mu)(0,1)^\top \nabla\cdot\boldsymbol
u=\mathscr T\boldsymbol u+\boldsymbol h\quad\text{on} ~ \Gamma_a,
\]
where $\boldsymbol h=2{\rm i}\kappa_1(\lambda+2\mu)(0, 1)^\top
e^{-{\rm i}\kappa_1 a}$. Noting the continuity condition \eqref{cca}, we have
\[
 \mu\partial_y\boldsymbol v+(\lambda+\mu)(0,1)^\top \nabla\cdot\boldsymbol
v=\mathscr T\boldsymbol v+\boldsymbol h\quad\text{on} ~ \Gamma_a.
\]

\section{Scattering data}\label{sec:sd}

We assume that the total field $\boldsymbol u$ is measured
on $\Gamma_a$, i.e., $\boldsymbol u(x, a)=(u_1(x, a), u_2(x, a))^\top$
is available for $x\in (0, \Lambda)$. In this section, we show how to convert
$\boldsymbol u(x, a)$ into the scattering data of the scalar potentials
$\phi_j(x, a)$.

Evaluating \eqref{u} on $\Gamma_a$, we have
\begin{equation}\label{ua}
 \partial_x \phi_1(x, a) +\partial_y\phi_2(x, a)=u_1(x, a),\quad
\partial_y\phi_1(x, a)-\partial_x\phi_2(x, a)=u_2(x, a).
\end{equation}
Let $\phi_j(x, a)$ admit the Fourier series expansion
\begin{equation}\label{fc}
 \phi_j(x, a)=\sum_{n\in\mathbb Z}\phi_{jn} e^{{\rm i}\alpha_n x}.
\end{equation}
It suffices to find all the Fourier coefficients of $\phi_{jn}$ in order to
determine $\phi_j(x, a)$.

Taking the derivative of \eqref{fc} with respect to $x$ yields
\begin{equation}\label{dxfc}
 \partial_x\phi_j(x, a)=\sum_{n\in\mathbb Z}{\rm i}\alpha_n \phi_{jn}
e^{{\rm i}\alpha_n x}.
\end{equation}
It follows from the transparent boundary condition \eqref{tbctj} that
\begin{equation}\label{dyfc}
 \partial_y \phi_j(x, a)=\sum_{n\in\mathbb Z}{\rm
i}\beta_{jn}\phi_{jn}e^{{\rm i}\alpha_n x}+g_j.
\end{equation}
Substituting \eqref{dxfc} and \eqref{dyfc} into \eqref{ua}, we obtain a linear
system of equations for the Fourier coefficients $\phi_{jn}$:
\begin{equation}\label{ls}
{\rm i}
 \begin{bmatrix}
  \alpha_n & \beta_{2n}\\
  \beta_{1n} & -\alpha_n
 \end{bmatrix}
 \begin{bmatrix}
  \phi_{1n}\\
  \phi_{2n}
 \end{bmatrix}
 =\begin{bmatrix}
   p_{1n}\\
   p_{2n}
  \end{bmatrix},
\end{equation}
where $p_{1n}=u_{1n}-g_{2n}$, $p_{2n}=u_{2n}-g_{1n}$  and
$u_{jn}$ are the Fourier coefficients of $u_j$, i.e.,
\[
 u_{jn}=\frac{1}{\Lambda}\int_0^\Lambda u_j(x, a) e^{-{\rm i}\alpha_n x}{\rm
d}x
\]
and
\[
 g_{1n}=\begin{cases}
            -2e^{-{\rm i}\kappa_1 a}&\quad\text{for} ~ n=0,\\
            0 &\quad\text{for} ~ n\neq 0,
           \end{cases}
           \quad
          g_{2n}=0~\text{for}~ n\in\mathbb Z.
\]
Using Cramer's rule, we obtain the unique solution of \eqref{ls}:
\begin{equation}\label{phin}
 \phi_{1n}=-{\rm i}\left(\frac{\alpha_n
p_{1n}+\beta_{2n}p_{2n}}{\alpha_n^2+\beta_{1n}\beta_{2n}}\right),
\quad \phi_{2n}={\rm i}\left(\frac{\alpha_n
p_{2n}-\beta_{1n}p_{1n}}{\alpha_n^2+\beta_{1n} \beta_{2n}}\right).
\end{equation}

Hence, we may assume that $\phi_j(x, a), j=1, 2$ are measured data. From now
on, we shall only work on the potential functions.

\section{Reduced problem}\label{sec:rp}

Recall the continuity condition \eqref{cca1} and the boundary condition
\eqref{tbcpj}. Given the data $\phi_j$ on $\Gamma_a$, we consider the Cauchy
problem for $\psi_j$:
\begin{subequations}\label{cp}
\begin{align}
 \Delta\psi_j+\eta_j^2\psi_j=0 &\quad\text{in}~ R,\\
 \psi_j=\left(\frac{\kappa_j^2}{\eta_j^2}\right)\phi_j&\quad\text{on} ~
\Gamma_a, \\
\partial_y\psi_1+\left(\frac{\eta_2^2-\kappa_2^2}{\kappa_2^2}
\right)\partial_x\psi_2=\left(\frac{\eta_1^2}{\kappa_1^2}
\right)\mathscr T_1\psi_1+g_1&\quad\text{on}~\Gamma_a, \\
\partial_y\psi_2-\left(\frac{\eta_1^2-\kappa_1^2}{\kappa_1^2}
\right)\partial_x\psi_1=\left(\frac{\eta_2^2}{\kappa_2^2}
\right)\mathscr T_2\psi_2+g_2&\quad\text{on}~\Gamma_a.
\end{align}
 \end{subequations}
Since $\psi_j$ is a periodic function of $x$, it has the Fourier series
expansion
\begin{equation}\label{fsp}
 \psi_j(x, y)=\sum_{n\in\mathbb Z}\psi_{jn}(y)e^{{\rm i}\alpha_n x}.
\end{equation}
Substituting  \eqref{fsp} into \eqref{cp}, we obtain a final value problem for
the second order equation in the frequency domain:
\begin{subequations}\label{cpf}
 \begin{align}
\partial^2_{yy}\psi_{jn}(y)+\gamma_{jn}^2\psi_{jn}(y)=0, &\quad
b<y<a,  \\
\psi_{jn}(a)=\left(\frac{\kappa_j^2}{\eta_j^2}\right)\phi_{jn}, &\quad
y=a,  \\
\partial_y \psi_{1n}(a)+{\rm
i}\alpha_n\left(\frac{\eta_2^2-\kappa_2^2}{\kappa_2^2}
\right)\psi_{2n}(a)={\rm i}\beta_{1n}\left(\frac{\eta_1^2}{\kappa_1^2}
\right)\psi_{1n}(a)+g_{1n},&\quad y=a, \\
\partial_y\psi_{2n}(a)-{\rm i}\alpha_n\left(\frac{\eta_1^2-\kappa_1^2}{
\kappa_1^2} \right)\psi_{1n}(a)={\rm
i}\beta_{2n}\left(\frac{\eta_2^2}{\kappa_2^2}
\right)\psi_{2n}(a)+g_{2n},&\quad y=a,
 \end{align}
 \end{subequations}
where $\phi_{jn}$ is given in \eqref{phin} and
\[
 \gamma_{jn}=\begin{cases}
           (\eta_j^2-\alpha_n^2)^{1/2}, &\quad |\alpha_n| < \eta_j,\\
           {\rm i}(\alpha_n^2-\eta_j^2)^{1/2}, &\quad |\alpha_n|>\eta_j.
          \end{cases}
\]
Again we assume that $\gamma_{jn}\neq 0$ to exclude possible resonance.

Using the continuity condition \eqref{cca1} again, we may further reduce
\eqref{cpf} into the following final value problem:
\begin{subequations}\label{fvp}
 \begin{align}
\partial^2_{yy}\psi_{jn}(y)+\gamma_{jn}^2\psi_{jn}(y)=0, &\quad
b<y<a,\\
\psi_{jn}=\hat\phi_{jn}, &\quad y=a, \\
\partial_y\psi_{jn}-{\rm
i}\hat\beta_{jn}\psi_{jn}=\hat{g}_{jn},&\quad y=a,
 \end{align}
 \end{subequations}
where
\[
 \hat\phi_{jn}=\left(\frac{\kappa_j^2}{\eta_j^2}\right)\phi_{jn},\quad
\hat\beta_{jn}=\left(\frac{\eta_j^2}{\kappa_j^2}\right)\beta_{jn}
\]
and
\begin{align*}
 \hat{g}_{1n}&=g_{1n}-{\rm i}\alpha_n\left(\frac{\eta_2^2-\kappa_2^2}{
\eta_2^2}\right)\phi_{2n},\\
\hat{g}_{2n}&=g_{2n}+{\rm i}\alpha_n\left(\frac{\eta_1^2-\kappa_1^2}{
\eta_1^2} \right)\phi_{1n}.
\end{align*}
It follows from Lemma \eqref{A1} that the final value problem \eqref{fvp} has a
unique solution which is
\begin{align}\label{fvps}
 \psi_{jn}(y)=&(2\gamma_{jn}^{-1})\left((\gamma_{jn}+\hat{\beta}_{jn} )\hat
{\phi}_{jn}-{\rm i}\hat{g}_{jn} \right)e^{-{\rm i}\gamma_{jn}(a-y)}\notag\\
&+(2\gamma_{jn})^{-1}\left((\gamma_{jn}-\hat{\beta}_{jn})\hat
{\phi}_{jn}+{\rm i}\hat{g}_{jn} \right)e^{{\rm i}\gamma_{jn}(a-y)}.
\end{align}
Evaluating \eqref{fvps} at $y=b$ yields
\begin{align}\label{fvpb}
 \psi_{jn}(b)=&(2\gamma_{jn})^{-1})\left((\gamma_{jn}+\hat{\beta}_{jn})\hat
{\phi}_{jn}-{\rm i}\hat{g}_{jn} \right)e^{-{\rm i}\gamma_{jn}(a-b)}\notag\\
&+(2\gamma_{jn})^{-1}\left((\gamma_{jn}-\hat{\beta}_{jn})\hat
{\phi}_{jn}+{\rm i}\hat{g}_{jn} \right)e^{{\rm i}\gamma_{jn}(a-b)}.
\end{align}
where $\psi_{jn}(b)$ are the Fourier coefficients of $\psi_j(x, b)$. Taking
the partial derivative of \eqref{fvps} with respect to $y$ and
evaluating it at $y=b$, we obtain
\begin{align}\label{pfvpb}
 \partial_y \psi_{jn}(b)=&
\frac{\rm i}{2}\left((\gamma_{jn}+\hat{\beta}_{jn})\hat{\phi}_{jn}-{\rm
i}\hat{g}_{jn} \right)e^{-{\rm i}\gamma_{jn}(a-b)}\notag\\
&-\frac{\rm i}{2}\left((\gamma_{jn}-\hat{\beta}_{jn})\hat
{\phi}_{jn}+{\rm i}\hat{g}_{jn} \right)e^{{\rm
i}\gamma_{jn}(a-b)}.
\end{align}

We point out that \eqref{fvpb} gives the far-to-near (FtN) field data conversion
formula. We observe from \eqref{fvpb} that it is stable to convert the far-field
data for the propagating wave components where the Fourier modes satisfy
$|\alpha_n|<\eta_j$; it is exponentially unstable to convert the far-field for
the evanescent wave components where the Fourier modes satisfy
$|\alpha_n|>\eta_j$. Thus it is only reliable to make the near-field data by
converting the low frequency far-field data $\phi_{jn}$ with
$|\alpha_n|<\eta_j$. Noting $\rho_1>\rho_0$ in the elastic slab, we are allowed
to include more propagating wave modes to reconstruct the surface than the case
without the slab, which contributes to a better resolution.

It follows from the continuity condition \eqref{ccb1} that
\begin{equation}\label{vb}
 \varphi_{jn}(b)=\left(\frac{\eta_j^2}{\kappa_j^2}\right)\psi_{jn}(b).
\end{equation}
Using the continuity conditions \eqref{ccb1}--\eqref{ccb2} on $\Gamma_b$, we
obtain
\begin{subequations}
\begin{align}
 \partial_y\varphi_1&=\partial_y\psi_1-\partial_x\psi_2+\partial_x\varphi_2
 =\partial_y\psi_1-\partial_x\psi_2+\left(\frac{\eta_2^2}{\kappa_2^2}
\right)\partial_x\psi_2\notag\\
&=\partial_y\psi_1+\left(\frac{\eta_2^2-\kappa_2^2}{\kappa_2^2}
\right)\partial_x\psi_2,\notag\\
\partial_y\varphi_2&=\partial_y\psi_2+\partial_x\psi_1-\partial_x\varphi_1
=\partial_y\psi_2+\partial_x\psi_1-\left(\frac{\eta_1^2}{\kappa_1^2}
\right)\partial_x\psi_1\notag\\
&=\partial_y\psi_2-\left(\frac{\eta_1^2-\kappa_1^2}{\kappa_1^2}
\right)\partial_x\psi_1,\notag
\end{align}
\end{subequations}
which give in the frequency domain that
\begin{align}\label{dvb}
 \partial_y\varphi_{1n}(b)&=\partial_y\psi_{1n}(b)+{\rm i}\alpha_n
\left(\frac{\eta_2^2-\kappa_2^2}{\kappa_2^2}\right)\psi_{2n}(b), \nonumber \\
\partial_y\varphi_{2n}(b)&=\partial_y\psi_{2n}(b)-{\rm
i}\alpha_n\left(\frac{\eta_1^2-\kappa_1^2}{\kappa_1^2}\right)\psi_{1n}(b).
\end{align}
Combining \eqref{vb} and \eqref{dvb}, we get
\begin{equation}\label{htbc}
 (\partial_y-{\rm i}\beta_{jn})\varphi_{jn}=  \tau_{jn},
\end{equation}
where
\begin{align}\label{tau}
 \tau_{1n}&=\partial_y\psi_{1n}(b)-{\rm
i}\hat{\beta}_{1n}\psi_{1n}(b)+{\rm i}\alpha_n
\left(\frac{\eta_2^2-\kappa_2^2}{\kappa_2^2}\right)\psi_{2n}(b),\nonumber \\
\tau_{2n}&=\partial_y\psi_{2n}(b)-{\rm
i}\hat{\beta}_{2n}\psi_{2n}(b)-{\rm i}\alpha_n
\left(\frac{\eta_1^2-\kappa_1^2}{\kappa_1^2}\right)\psi_{1n}(b).
\end{align}
Here the Fourier coefficients $\psi_{jn}(b)$ and $\partial_y\psi_{jn}(b)$
are given in \eqref{fvpb} and \eqref{pfvpb}, respectively.

Using the boundary conditions \eqref{hdbc} and \eqref{htbc}, we may consider the
following reduced boundary value problem for the scalar potential $\varphi_j$ in
$\Omega$:
\begin{subequations}\label{bvp}
\begin{align}
\label{fhe} \Delta\varphi_j + \kappa_j^2\varphi_j=0 &\quad\text{in}~\Omega,\\
\label{fdbc} \partial_x\varphi_1+\partial_y\varphi_2=0, \quad
\partial_y\varphi_1-\partial_x\varphi_2=0 &\quad\text{on}~ \Gamma_f,\\
\label{ftbc}\partial_y\varphi_j=\mathscr T_j\varphi_j+\tau_j &\quad\text{on} ~
\Gamma_b,
\end{align}
\end{subequations}
where the Fourier coefficients of $\tau_j$ are given in \eqref{tau}. The
inverse problem is reformulated to determine the periodic scattering surface
function $f$ from the Fourier coefficients $\varphi_{jn}(b)$ for $n\in
M_j=\{n\in\mathbb Z: |\alpha_n|<\eta_j\}$.

\section{Transformed field expansion}\label{sec:tfe}

In this section, we introduce the transformed field expansion to derive an
analytic solution to the boundary value problem \eqref{bvp}.

\subsection{Change of variables}

Consider the change of variables:
\[
  \tilde{x} = x, \quad \tilde{y} = b \left( \frac{y-f}{b-f} \right),
\]
which maps $\Gamma_f$ to $\Gamma_0$ but keeps $\Gamma_b$ unchanged. Hence the
domain $\Omega$ is mapped into the rectangular domain
$D=\{(\tilde{x}, \tilde{y})\in\mathbb{R}^2: 0<\tilde{x}<\Lambda,
\,0<\tilde{y}<b\}$. It is easy to verify the differential rules:
\begin{align*}
\partial_x = & \partial_{\tilde{x}} - f' \left( \frac{b-\tilde{y}}{b-f} \right)
\partial_{\tilde{y}},\\
    \partial_y = &\left( \frac{b}{b-f} \right) \partial_{\tilde{y}},\\
    \partial^2_{xx} = & \partial^2_{\tilde{x}\tilde{x}} + (f')^2 \left(
\frac{b-\tilde{y}}{b-f} \right)^2 \partial^2_{\tilde{y}\tilde{y}} - 2 f' \left(
\frac{b-\tilde{y}}{b-f} \right) \partial^2_{\tilde{x}\tilde{y}} \\
    &- \left[ f'' \left( \frac{b-\tilde{y}}{b-f} \right) + 2 (f')^2
\frac{(b-\tilde{y})}{(b-f)^2} \right] \partial_{\tilde{y}}, \\
    \partial^2_{yy} = & \left( \frac{b}{b-f}
\right)^2\partial^2_{\tilde{y}\tilde{y}}.
\end{align*}

We introduce a function $\tilde{\varphi}_j(\tilde{x}, \tilde{y})$ in order to
reformulate the boundary value problem (\ref{bvp}) using the new variables.
Noting \eqref{fhe}, we have from the straightforward calculations that
$\tilde{\varphi}$, upon dropping the tilde for simplicity of notation,
satisfies
\begin{equation}\label{tdf}
 \left(c_1 \partial^2_{xx} + c_2 \partial^2_{yy} + c_3 \partial^2_{xy} + c_4
\partial_y + c_1 \kappa^2_j \right) \varphi_j = 0 \quad\text{in} ~ D,
\end{equation}
where
\begin{equation}\label{c}
  \begin{cases}
    c_1 = (b-f)^2,\\
    c_2 = \left[ f' (b-y) \right]^2 + b^2,\\
    c_3 = -2 f' (b-y) (b-f),\\
    c_4 = - (b-y) \left[ f''(b-f) + 2 (f')^2 \right].
  \end{cases}
\end{equation}
The boundary condition \eqref{fdbc} becomes
\begin{equation}\label{tdbc}
     \left[ \left( 1 - b^{-1} f \right) \partial_x - f' \partial_y \right]
\varphi_1+ \partial_y \varphi_2 = 0, \quad
    \partial_y \varphi_1 - \left[ \left( 1 - b^{-1} f \right) \partial_x - f'
\partial_y \right] \varphi_2 = 0.
\end{equation}
The boundary condition \eqref{ftbc} reduces to
\begin{equation}\label{ttbc}
\partial_y \varphi_j = \left( 1 - b^{-1} f \right) (\mathscr T_j\varphi_j +
\tau_j).
\end{equation}

\subsection{Power series expansion}

Noting the surface function \eqref{f}, we resort to the perturbation technique
and consider formal power series expansion of $\varphi_j$ in terms of
$\varepsilon$:
\begin{equation}\label{ps}
 \varphi_j(x,y; \varepsilon)=\sum_{k=0}^\infty \varphi_j^{(k)}(x, y)
\varepsilon^k.
\end{equation}
Substituting \eqref{f} into \eqref{c} and plugging \eqref{ps} into \eqref{tdf},
we may obtain the recurrence equations for $\varphi_j^{(k)}$ in $D$:
\begin{equation}\label{the}
\Delta\varphi_j^{(k)} + \kappa_j^2\varphi_j^{(k)} = u_j^{(k)},
\end{equation}
where
\begin{equation}\label{ujk}
  u_j^{(k)}=\mathscr{D}^{(1)}_j\varphi_j^{(k-1)} +
\mathscr{D}^{(2)}_j\varphi_j^{(k-2)}.
\end{equation}
Here the differential operators are 
\begin{align*}
 \mathscr{D}_j^{(1)} = & b^{-1} \left[ 2g \partial^2_{xx} + 2g'(b-y)
\partial^2_{xy} + g''(b-y) \partial_y + 2 \kappa_j^2 g \right],\\
    \mathscr{D}_j^{(2)} = & - b^{-2} \left\{ g^2 \partial^2_{xx} + (g')^2
(b-y)^2 \partial^2_{yy} + 2gg'(b-y) \partial^2_{xy} \right. \\
    &\left. - \left[ 2(g')^2 - gg'' \right](b-y) \partial_y + \kappa_j^2
g^2 \right\}.
\end{align*}
Substituting \eqref{f} and \eqref{ps} into \eqref{tdbc}, we obtain the
recurrence equations for the boundary conditions on $\Gamma_0$:
\[
 \partial_x \varphi_1^{(k)} + \partial_y \varphi_2^{(k)}= p^{(k)},\quad
 \partial_y \varphi_1^{(k)} - \partial_x \varphi_2^{(k)} = q^{(k)},
\]
where
\begin{equation}\label{pq}
  p^{(k)}= \left( b^{-1} g \partial_x + g' \partial_y \right)
\varphi_1^{(k-1)},\quad
  q^{(k)}=- \left( b^{-1} g \partial_x + g' \partial_y \right)
\varphi_2^{(k-1)}.
\end{equation}
Substituting \eqref{f} and \eqref{ps} into \eqref{ttbc}, we derive the
recurrence equations for the transparent boundary conditions on $\Gamma_b$:
\[
\left( \partial_y - \mathscr T_j \right) \varphi_j^{(k)} =r_j^{(k)},
\]
where
\begin{equation}\label{rs}
 r_j^{(0)}=\tau_j, \quad r_j^{(1)}=-b^{-1} g (\mathscr T_j\varphi_j^{(0)} +
\tau_j),\quad r_j^{(k)}=-b^{-1}g \mathscr T_j\varphi_j^{(k-1)}.
\end{equation}

In all of the above recurrence equations, it is understood that
$\varphi_j^{(k)}, u_j^{(k)}, p^{(k)}, q^{(k)}, r_j^{(k)}$ are zeros when $k<0$.
The boundary value problem \eqref{the}--\eqref{rs} for the current terms
$\varphi_j^{(k)}$ involve $u_j^{(k)}, p^{(k)}, q^{(k)}, r_j^{(k)}$, which
depend only on previous two terms of $\varphi_j^{(k-1)}, \varphi_j^{(k-2)}$.
Thus, the boundary value problem \eqref{the}--\eqref{rs} can be recursively
solved from $k=0$.

\subsection{Fourier series expansion}

Since $\varphi_j^{(k)}$ are periodic functions of $x$ with period $\Lambda$,
they have the Fourier series expansions
\begin{equation}\label{fs}
 \varphi_j^{(k)}(x, y)=\sum_{n\in\mathbb{Z}}\varphi_{jn}^{(k)}(y)e^{{\rm
i}\alpha_n x}.
\end{equation}
Substituting \eqref{fs} into the boundary value problem
\eqref{the}--\eqref{rs}, we obtain a coupled two-point boundary value problems:
 \begin{align}\label{p1k}
  \partial^2_{yy}\varphi_{1n}^{(k)}  + \beta_{1n}^2 \varphi_{1n}^{(k)}
=u_{1n}^{(k)}, \quad & 0 < y < b,  \nonumber \\
    \partial_y\varphi_{1n}^{(k)}  = q_n^{(k)} +  {\rm i} \alpha_n
\varphi_{2n}^{(k)},\quad & y = 0,  \\
    \partial_y \varphi_{1n}^{(k)} - {\rm i}\beta_{1n}  \varphi_{1n}^{(k)}
=r_{1n}^{(k)}, \quad & y = b \nonumber
 \end{align}
and
 \begin{align}\label{p2k}
  \partial^2_{yy}\varphi_{2n}^{(k)}  + \beta_{2n}^2 \varphi_{2n}^{(k)}
=u_{2n}^{(k)}, \quad & 0 < y < b, \nonumber \\
    \partial_y\varphi_{2n}^{(k)}  = p_n^{(k)} -  {\rm i} \alpha_n
\varphi_{1n}^{(k)}, \quad & y = 0,  \\
    \partial_y \varphi_{2n}^{(k)} - {\rm i}\beta_{2n}\varphi_{2n}^{(k)}
=r_{2n}^{(k)}, \quad & y = b,\nonumber
 \end{align}
where $u_{jn}^{(k)}, p_n^{(k)}, q_n^{(k)}, r_{jn}^{(k)}$ are the Fourier
coefficients of $u_j^{(k)}, p^{(k)}, q^{(k)}, r_j^{(k)}$, respectively.

It follows from Lemma \ref{A2} that the solutions of \eqref{p1k} and
\eqref{p2k} are
\begin{subequations}\label{rvpk}
\begin{align}
 \varphi_{1n}^{(k)}(y)=&K_1(y; \beta_{1n})(q_n^{(k)}+{\rm
i}\alpha_n\varphi_{2n}^{(k)}(0))\notag\\
&-K_2(y; \beta_{1n})r_{1n}^{(k)}+\int_0^b K_3(y, z;
\beta_{1n})u_{1n}^{(k)}(z){\rm d}z,\label{rvpka} \\
\varphi_{2n}^{(k)}(y)=&K_1(y; \beta_{2n})(p_n^{(k)}-{\rm
i}\alpha_n\varphi_{1n}^{(k)}(0))\notag\\
&-K_2(y; \beta_{2n})r_{2n}^{(k)}+\int_0^b K_3(y, z;
\beta_{2n})u_{2n}^{(k)}(z){\rm d}z, \label{rvpkb}
\end{align}
\end{subequations}
where $\varphi_{jn}^{(k)}(0)$ are to be determined. Evaluating
$\varphi_{jn}^{(k)}(y)$ at $y=0$ in the above equations and recalling $K_j$ in
Lemma \ref{A2}, we obtain
\begin{align*}
 {\rm i}\beta_{1n}\varphi_{1n}^{(k)}(0)=(q_n^{(k)}+{\rm
i}\alpha_n\varphi_{2n}^{(k)}(0))-e^{{\rm i}\beta_{1n}b}r_{1n}^{(k)} +
\int_0^b e^{{\rm i}\beta_{1n}z} u_{1n}^{(k)}(z){\rm d}z,\\
{\rm i}\beta_{2n}\varphi_{2n}^{(k)}(0)=(p_n^{(k)}-{\rm
i}\alpha_n\varphi_{1n}^{(k)}(0))-e^{{\rm i}\beta_{2n}b}r_{2n}^{(k)} +
\int_0^b e^{{\rm i}\beta_{2n}z} u_{2n}^{(k)}(z){\rm d}z,
\end{align*}
which yields a system of algebraic equations for $\varphi_{jn}^{(k)}(0)$:
\begin{equation}\label{lsfc}
{\rm i} \begin{bmatrix}
  \beta_{1n} & -\alpha_n\\[5pt]
  \alpha_n & \beta_{2n}
 \end{bmatrix}
 \begin{bmatrix}
  \varphi_{1n}^{(k)}(0)\\[5pt]
  \varphi_{2n}^{(k)}(0)
 \end{bmatrix}
 =\begin{bmatrix}
   v^{(k)}_{1n}\\[5pt]
   v^{(k)}_{2n}
   \end{bmatrix},
\end{equation}
where
\begin{align*}
 v^{(k)}_{1n}&=q_n^{(k)} - e^{{\rm
i}\beta_{1n} b} r_{1n}^{(k)} + \int_0^b e^{{\rm i}\beta_{1n} z}
u_{1n}^{(k)}(z){\rm d}z,\\
v^{(k)}_{2n}&=p_n^{(k)} - e^{{\rm i}\beta_{2n} b} r_{2n}^{(k)} +
\int_0^b e^{{\rm i}\beta_{2n} z} u_{2n}^{(k)}(z){\rm d}z.
\end{align*}
It follows from Cramer's rule again that the linear system has a unique solution
which is given by
\[
\varphi_{1n}^{(k)}(0)=-{\rm i}\left(\frac{\beta_{2n} v^{(k)}_{1n}+\alpha_n
v^{(k)}_{2n}}{\alpha_n^2 + \beta_{1n}\beta_{2n}}
\right),\quad \varphi_{2n}^{(k)}(0)=-{\rm i}\left(\frac{\beta_{1n}
v^{(k)}_{2n}-\alpha_n v^{(k)}_{1n}}{\alpha_n^2 + \beta_{1n}\beta_{2n}}
\right).
\]
Once $\varphi_{jn}^{(k)}(0)$ are determined, $\varphi_{jn}^{(k)}(y)$ can be
computed from \eqref{rvpka} and   \eqref{rvpkb} explicitly for all $k$ and $n$.

\subsection{Leading terms}

For $k=0$, it follows from \eqref{ujk}, \eqref{pq}, and \eqref{rs} that we
obtain
\[
 u_j^{(0)}=p^{(0)}=q^{(0)}=0, \quad r_j^{(0)}=\tau_j.
\]
Their Fourier coefficients are
\begin{equation}\label{fc0}
u_{jn}^{(0)}=p_n^{(0)}=q_n^{(0)}=0, \quad r_{jn}^{(0)}=\tau_{jn}.
\end{equation}
Substituting \eqref{fc0} into \eqref{lsfc} yields
\[
 v_{jn}^{(0)}=-e^{{\rm i}\beta_{jn}b}\tau_{jn}
\]
and
\begin{align}\label{vp0}
 \varphi_{1n}^{(0)}(0)=\left(\frac{{\rm i}\beta_{2n}e^{{\rm
i}\beta_{1n}b}}{\alpha_n^2+\beta_{1n}\beta_{2n}}\right)\tau_{1n}
+\left(\frac{{\rm i}\alpha_n e^{{\rm
i}\beta_{2n}b}}{\alpha_n^2+\beta_{1n}\beta_{2n}}\right)
\tau_{2n}, \nonumber \\
\varphi_{2n}^{(0)}(0)=\left(\frac{{\rm i}\beta_{1n}e^{{\rm
i}\beta_{2n}b}}{\alpha_n^2+\beta_{1n}\beta_{2n}}\right)\tau_{2n}
-\left(\frac{{\rm i}\alpha_n e^{{\rm
i}\beta_{1n}b}}{\alpha_n^2+\beta_{1n}\beta_{2n}}\right)
\tau_{1n}. 
\end{align}
Plugging \eqref{vp0} into \eqref{rvpk}, we get
\begin{subequations}
 \begin{align}
  \varphi_{1n}^{(0)}(y)&={\rm i}\alpha_n K_1(y,
\beta_{1n})\varphi_{2n}^{(0)}(0)-K_2(y, \beta_{1n})\tau_{1n}\notag\\
&=M_{1 1}^{(n)}(y)\tau_{1n}+M_{1 2}^{(n)}(y)\tau_{2n}, \label{rvp0a} \\
\varphi_{2n}^{(0)}(y)&=-{\rm i}\alpha_n K_1(y;
\beta_{2n})\varphi_{1n}^{(0)}(0)-K_2(y; \beta_{2n})\tau_{2n}\notag\\
&=M_{21}^{(n)}(y)\tau_{1n}+M_{2 2}^{(n)}(y)\tau_{2n}, \label{rvp0b}
 \end{align}
\end{subequations}
where
\begin{align*}
 M_{11}^{(n)}(y)&=-\left(\frac{{\rm i}\alpha^2_n
e^{{\rm i}\beta_{1n} b}}{\beta_{1n}(\alpha_n^2+\beta_{1n}\beta_{2n})}
\right)e^{{\rm i}\beta_{1n}y}+\frac{{\rm
i}e^{{\rm i}\beta_{1n}b}}{2\beta_{1n}}(e^{{\rm i}\beta_{1n}
y}+e^{-{\rm i}\beta_{1n} y}),\\
M_{12}^{(n)}(y)&=\left(\frac{{\rm i}\alpha_n e^{{\rm
i}\beta_{2n} b}}{\alpha_n^2+\beta_{1n}\beta_{2n}}
\right)e^{{\rm i}\beta_{1n}y},\\
M_{21}^{(n)}(y)&=-\left(\frac{{\rm i}\alpha_n e^{{\rm
i}\beta_{1n} b}}{\alpha_n^2+\beta_{1n}\beta_{2n}}
\right)e^{{\rm i}\beta_{2n}y},\\
M_{22}^{(n)}(y)&=-\left(\frac{{\rm i}\alpha^2_n
e^{{\rm i}\beta_{2n} b}}{\beta_{2n}(\alpha_n^2+\beta_{1n}\beta_{2n})}
\right)e^{{\rm i}\beta_{2n}y}+\frac{{\rm
i}e^{{\rm i}\beta_{2n}b}}{2\beta_{2n}}(e^{{\rm i}\beta_{2n}
y}+e^{-{\rm i}\beta_{2n}y}).
\end{align*}

\subsection{Linear terms}

For $k=1$, it follows from \eqref{ujk}--\eqref{rs}  that we obtain
\begin{align*}
u_j^{(1)}&= b^{-1} \left[ 2g \partial^2_{xx} + 2g'(b-y) \partial^2_{xy} +
g''(b-y) \partial_y + 2 \kappa_j^2 g \right] \varphi_j^{(0)},\\
p^{(1)}&=  \left( b^{-1} g \partial_x + g' \partial_y \right)
\varphi_1^{(0)},\\
q^{(1)}&=- \left( b^{-1} g \partial_x + g' \partial_y \right) \varphi_2^{(0)},\\
 r_j^{(1)}&=-b^{-1} g (\mathscr T_j\varphi_j^{(0)} +
\tau_j).
\end{align*}
Using the convolution theorem and \eqref{rvp0a}--\eqref{rvp0b} yields 
\begin{subequations}\label{eq:u1}
\begin{align}
u_{jn}^{(1)}(y)&=\sum_{m \in \mathbb
Z}U_{j}^{(n,m)}(y)g_{n-m},\\
p_{1n}(y)&=\sum_{m \in \mathbb
Z}P_m(y)g_{n-m}, \\
q_{1n}(y)&=\sum_{m \in \mathbb
Z}Q_m(y)g_{n-m},\\
r_{jn}^{(1)} (y)&=-b^{-1} \sum_{m \in \mathbb Z} \left (
R_{jm}(y) +\tau_{jm}\right) g_{n-m},
\end{align}
\end{subequations}
where
\begin{align*}
U_{j}^{(n,m)}(y)&=b^{-1}\left[2 (\beta_{jm})^2 M_{j1}^{(m)}(y)  +
(\alpha_{m}^2-\alpha_n^2 )(b-y ) \partial_y  M_{j1}^{(m)}(y) \right]
\tau_{1m}\\
&\quad +b^{-1}\left[2( \beta_{jm})^2 M_{j2}^{(m)}(y)  + (\alpha_{m}^2-\alpha_n^2
)(b-y ) \partial_y  M_{j2}^{(m)}(y)   \right] \tau_{2m},\\
P_m(y)&={\rm i}\alpha_m b^{-1}\left(M_{1 1}^{(m)}(y)\tau_{1m}+M_{1
2}^{(m)}(y)\tau_{2m}\right )\\
&\quad +{\rm i}(\alpha_n-\alpha_m) \left(\partial_y M_{1
1}^{(m)}(y)\tau_{1m}+\partial_y M_{1 2}^{(m)}(y)\tau_{2m}\right),\\
Q_m(y)&=-{\rm i}\alpha_m b^{-1}\left(M_{2 1}^{(m)}(y)\tau_{1m}+M_{2
2}^{(m)}(y)\tau_{2m}\right )\\
&\quad -{\rm i}(\alpha_n-\alpha_m) \left(\partial_y M_{2
1}^{(m)}(y)\tau_{1m}+\partial_y M_{2 2}^{(m)}(y)\tau_{2m}\right )\\
R_{jm}(y)&={\rm i} \beta_{jm}\left (M_{j1}^{(m)}(y)\tau_{1m}+M_{j
2}^{(m)}(y)\tau_{2m} \right).
\end{align*}

When $k=1$, recalling the expressions of $\varphi_{jn}^{(1)}(0)$ and evaluating
\eqref{rvpk}  at $y=b$,  we have 
\begin{align*}
 \varphi_{1n}^{(1)}(b)&=K_1(b; \beta_{1n})(q_n^{(1)}+{\rm
i}\alpha_n\varphi_{2n}^{(1)}(0))-K_2(b; \beta_{1n})r_{1n}^{(1)}+\int_0^b
K_3(b, z; \beta_{1n})u_{1n}^{(1)}(z){\rm d}z \\
&=\frac{e^{{\rm i}\beta_{1n} b}}{{\rm i}\beta_{1n}}
(q_n^{(1)}+{\rm i}\alpha_n\varphi_{2n}^{(1)}(0))-\frac{e^{{\rm i}\beta_{1n}
b}}{2{\rm i}\beta_{1n}}(e^{{\rm i}\beta_{1n} b}+e^{-{\rm i}\beta_{1n}
b}) r_{1n}^{(1)}\\
&\quad +\int_0^b \frac{e^{{\rm i}\beta_{1n} b}}{2{\rm
i}\beta_{1n}}(e^{{\rm i}\beta_{1n} z}+e^{-{\rm i}\beta_{1n} z})
u_{1n}^{(1)}(z){\rm d}z \\
&=\frac{e^{{\rm i}\beta_{1n} b}}{( 2{\rm i}\beta_{1n}) (\alpha_n^2 +
\beta_{1n}\beta_{2n})}\Bigg (2 \beta_{1n}\beta_{2n}  q_n^{(1)}+ 2 
\alpha_n\beta_{1n} p_n^{(1)} - 2 \alpha_n\beta_{1n}
 e^{{\rm i}\beta_{2n} b} r_{2n}^{(1)} \\
 &\quad +( \alpha_n^2 -\beta_{1n} \beta_{2n} )e^{{\rm
i}\beta_{1n} b} r_{1n}^{(1)}-( \alpha_n^2 + \beta_{1n}\beta_{2n}) 
e^{-{\rm i}\beta_{1n} b} r_{1n}^{(1)}+ 2 \alpha_n\beta_{1n} \int_0^b e^{{\rm
i}\beta_{2n} z} u_{2n}^{(1)}(z){\rm d}z \\
& \quad -2 \alpha_n^2  \int_0^b e^{{\rm i}\beta_{1n} z} u_{1n}^{(1)}(z){\rm
d}z+( \alpha_n^2 + \beta_{1n}\beta_{2n}) \int_0^b (e^{{\rm
i}\beta_{1n} z}+e^{-{\rm i}\beta_{1n} z}) u_{1n}^{(1)}(z){\rm d}z\Bigg),
\end{align*}
and
\begin{align*}
\varphi_{2n}^{(1)}(b)&=K_1(b; \beta_{2n})(p_n^{(1)}-{\rm
i}\alpha_n\varphi_{1n}^{(1)}(0))-K_2(b; \beta_{2n})r_{2n}^{(1)}+\int_0^b
K_3(b, z; \beta_{2n})u_{2n}^{(1)}(z){\rm d}z \\
&=\frac{e^{{\rm i}\beta_{2n} b}}{{\rm i}\beta_{2n} }  (p_n^{(1)}-{\rm
i}\alpha_n\varphi_{1n}^{(1)}(0))-\frac{e^{{\rm i}\beta_{2n} b}}{2{\rm
i}\beta_{2n}}(e^{{\rm i}\beta_{2n} b} +e^{-{\rm i}\beta_{2n} b}) r_{2n}^{(1)}\\
&\quad +\int_0^b  \frac{e^{{\rm i}\beta_{2n} b}}{2{\rm
i}\beta_{2n}}(e^{{\rm i}\beta_{2n} z}+e^{-{\rm i}\beta_{2n} z})
u_{2n}^{(1)}(z){\rm d}z \\
&=\frac{e^{{\rm i}\beta_{2n} b}}{( 2{\rm i}\beta_{2n}) ( \alpha_n^2 +
\beta_{1n}\beta_{2n})}\Bigg (2\beta_{1n}\beta_{2n}p_n^{(1)}-
2\alpha_n \beta_{2n} q_n^{(1)} +  2\alpha_n \beta_{2n} e^{{\rm
i}\beta_{1n} b} r_{1n}^{(1)}\\
&\quad  + (\alpha_n^2- \beta_{1n}\beta_{2n})  e^{{\rm i}\beta_{2n} b}
r_{2n}^{(1)}- ( \alpha_n^2 + \beta_{1n}\beta_{2n} ) e^{-{\rm
i}\beta_{2n}b} r_{2n}^{(1)}- 2 \alpha_n \beta_{2n}\int_0^b e^{{\rm
i}\beta_{1n} z} u_{1n}^{(1)}(z){\rm d}z  \\
&\quad -2 \alpha_n^2 \int_0^b e^{{\rm i}\beta_{2n} z} u_{2n}^{(1)}(z){\rm
d}z + ( \alpha_n^2 + \beta_{1n}\beta_{2n} )\int_0^b (e^{{\rm
i}\beta_{2n} z}+e^{-{\rm i}\beta_{2n} z}) u_{2n}^{(1)}(z){\rm d}z\Bigg).
\end{align*}

Substituting  \eqref{eq:u1} into \eqref{rvpk} and evaluating at $y=b$, after
tedious  but straight forward calculations,  we obtain the key identities:
\begin{subequations}\label{rvpk1}
\begin{align}
 \varphi_{1n}^{(1)}(b)=&\sum_{m \in \mathbb Z} \frac{e^{{\rm i}\beta_{1n}
b}}{( 2{\rm i}\beta_{1n})(\alpha_n^2 + \beta_{1n}\beta_{2n})( \alpha_m^2 +
\beta_{1m}\beta_{2m})}A_1^{(n,m)} g_{n-m} ,\\
 \varphi_{2n}^{(1)}(b)=&\sum_{m \in \mathbb Z} \frac{e^{{\rm i}\beta_{2n}
b}}{( 2{\rm i}\beta_{2n})(\alpha_n^2 + \beta_{1n}\beta_{2n})( \alpha_m^2 +
\beta_{1m}\beta_{2m})}A_2^{(n,m)} g_{n-m}, 
 \end{align}
 \end{subequations}
where
\begin{align*}
A_{1}^{(n,m)} =&\Bigg \{ b^{-1}\bigg [ -2\beta_{1n} \beta_{2n}  \alpha_m^2
e^{ {\rm i}(\beta_{1m}+\beta_{2m})b} + \frac{\alpha_n \alpha_m
\beta_{1n}}{\beta_{1m}} ( \alpha_m^2 - \beta_{1m} \beta_{2m} ) e^{2{\rm i}
\beta_{1m}b}\\
&\quad + 2\Big \{ \alpha_m \beta_{1n} (\alpha_n \beta_{2m}+ \alpha_m
\beta_{2n}) + {\rm i} b \beta_{1n} \big [ \alpha_n \alpha_m 
\beta_{2m}( \beta_{2n}-  \beta_{2m})  \\
&\quad -( \alpha_n \alpha_m )^2  + \beta_{1m}^2  \beta_{2m}
\beta_{2n} \big  ]\Big \} e^{{\rm i}  \beta_{1m} b} - \frac{\alpha_n 
\alpha_m \beta_{1n} }{\beta_{1m}} (\alpha_m^2+ \beta_{1m} \beta_{2m} )\bigg ]\\
&\quad -   {\rm i} \beta_{1n}   (\alpha_n -\alpha_m)   \bigg [ 2 \alpha_m
\beta_{2m} \beta_{2n}  e^{{\rm i} (\beta_{1m}+ \beta_{2m} ) b}-
\alpha_n  (\alpha_m^2 -\beta_{1m} \beta_{2m} ) e^{2{\rm i} \beta_{1m} b}\\
&\quad -\alpha_n (\alpha_m^2 +\beta_{1m}  \beta_{2m} )  \bigg ] \Bigg \} 
\tau_m^{(1)} 
+\Bigg \{  b^{-1}\bigg [-2 \alpha_n \alpha_m^2 \beta_{1n}  e^{{\rm
i}(\beta_{1m} + \beta_{2m}) b }\\
&\quad  - \frac{\alpha_m \beta_{1n} \beta_{2n}}{ \beta_{2m}} ( \alpha_m^2
-\beta_{1m}  \beta_{2m} ) e^{2 {\rm i} \beta_{2m} b}+ 2 \Big \{  \alpha_m
\beta_{1n} \big( \alpha_n  \alpha_m - \beta_{1m}  \beta_{2n} \big)\\
&\quad +  {\rm i} b \beta_{1n} \Big [ 	\alpha_n (	\alpha_m^2 
\beta_{2n}+ \beta_{2m}^2 \beta_{1m})+ \alpha_m \beta_{1m}  (\alpha_n^2
+\beta_{1m}  \beta_{2n})  \Big ] \Big \}e^{ {\rm i} \beta_{2m} b}\\
&\quad + \frac{\alpha_m \beta_{1n}  \beta_{2n}}{\beta_{2m}}( \alpha_m^2 +
\beta_{1m}  \beta_{2m} )  \bigg ] -{\rm i} \beta_{1n}
(\alpha_n-\alpha_m) \bigg[  2 \alpha_n \alpha_m  \beta_{1m} e^{{\rm i}
(\beta_{1m} + \beta_{2m} )b }\\
&\quad + \beta_{2n} (\alpha_m^2 - \beta_{1m} \beta_{2m}) e^{2 {\rm
i} \beta_{2m} b} + \beta_{2n} (\alpha_m^2+ \beta_{1m}  
\beta_{2m})   \bigg] \Bigg \} \tau_m^{(2)},
\end{align*}
and
\begin{align*}
A_{2}^{(n,m)} =&\Bigg \{  b^{-1}\bigg [  2\alpha_{n}
\alpha_m^2  \beta_{2n}   e^{ {\rm i} (\beta_{1m}+ \beta_{2m}  ) b} +
\frac{ \alpha_m \beta_{1n} \beta_{2n}  }{\beta_{1m} } ( \alpha_m^2
-\beta_{1m} \beta_{2m}  ) e^{2{\rm i} \beta_{1m} b}\\
&\quad -2\Big \{  \alpha_m  \beta_{2n} (  \alpha_n  \alpha_m -\beta_{1n} 
\beta_{2m} ) + {\rm i} b   \beta_{2n} \Big[ \alpha_n   (\alpha_m^2  
\beta_{1n} +\beta_{1m}^2 \beta_{2m})\\
&\quad +  \alpha_m \beta_{2m}  \big( \alpha_n^2+  \beta_{1n}
\beta_{2m} )  \Big] \Big \}  e^{{\rm i}  \beta_{1m} b}- \frac{  \alpha_m
\beta_{1n}  \beta_{2n} }{\beta_{1m} } (\alpha_m^2+ \beta_{1m}
\beta_{2m} ) \bigg ] \\
&\quad + {\rm i} \beta_{2n}  (\alpha_n -\alpha_m)  \Big [ 2 \alpha_n
\alpha_m   \beta_{2m}   e^{{\rm i}(\beta_{1m} + \beta_{2m})  b}+
\beta_{1n}  (  \alpha_m^2- \beta_{1m} \beta_{2m} ) e^{2{\rm i}
\beta_{1m} b}\\
&\quad +   \beta_{1n} (\alpha_m^2 +\beta_{1m} \beta_{2m}) \Big]
\Bigg \} \tau_m^{(1)}+\Bigg \{ b^{-1}\bigg [  -2 \beta_{1n} \beta_{2n}
\alpha_m^2  e^{{\rm i}(\beta_{1m} + \beta_{2m})b} \\
&\quad + \frac{\alpha_m \alpha_n \beta_{2n}}{ \beta_{2m}} (
\alpha_m^2- \beta_{1m}  \beta_{2m}) e^{2 {\rm i} \beta_{2m}b}+  2
\Big  \{ \alpha_m \beta_{2n} \big (\alpha_n \beta_{1m} +\alpha_m
\beta_{1n} \big) \\
&\quad +{\rm i} b  \beta_{2n} \Big [\alpha_n \alpha_m  \beta_{1m} (
\beta_{1n} -\beta_{1m})  -(\alpha_n \alpha_m)^2 + \beta_{2m}^2
\beta_{1m} \beta_{1n}  \Big] \Big   \}   e^{ {\rm i} \beta_{2m}b }\\
&\quad - \frac{\alpha_m \alpha_n \beta_{2n}}{ \beta_{2m}} (
\alpha_m^2 + \beta_{1m}  \beta_{2m} )  \bigg ] -{\rm i} \beta_{2n}
(\alpha_n-\alpha_m) \bigg[ 2 \alpha_m  \beta_{1n}   \beta_{1m}  e^{{\rm i}
(\beta_{1m} + \beta_{2m})b}\\
&\quad  - \alpha_n (\alpha_m^2 - \beta_{1m}\beta_{2m}) e^{2 {\rm i}
\beta_{2m}  b} -\alpha_n   (\alpha_m^2+ \beta_{1m}  \beta_{2m})  
\bigg ] \Bigg \}\tau_m^{(2)}.
\end{align*}

\section{inverse problem}\label{sec:ip}

In this section, we give reconstruction formulas for the inverse problem by
dropping the higher order terms in the power series. Moreover, a nonlinear
correction scheme is proposed to improve the accuracy of the reconstruction.

\subsection{Reconstruction formula}

First, we rewrite the power series expansion \eqref{ps} of $\varphi_1$ and
$\varphi_2$ as follows, 
\begin{equation}\label{eq:varphi}
\varphi_j(x,y)=\varphi_j^{(0)}(x,y)+\varepsilon \varphi_j^{(1)}(x,y)+e_j(x,y),
\end{equation}
where $e_j(x,y)={\mathcal O}(\varepsilon^2)$ denote the remainder consisting of
all the high oder terms. Evaluating \eqref{eq:varphi} at $y=b$ and dropping
$e_j(x,y)$, we get the linearized equation:
\[
\varphi_j(x,b)=\varphi_j^{(0)}(x,b)+\varepsilon \varphi_j^{(1)}(x,b),
\]
which, in the frequency domain, 
\begin{equation}\label{eq:sec 7}
\varphi_{jn}(b)=\varphi_{jn}^{(0)}(b)+\varepsilon \varphi_{jn}^{(1)}(b).
\end{equation}
Substituting \eqref{rvpk1} into \eqref{eq:sec 7} and noting $f=\varepsilon g$,
we obtain an infinite dimensional linear system of equations:
\[
\sum_{m \in \mathbb Z} C_j^{(n, m)} f_{n-m}
= \varphi_{jn}(b)-\varphi_{jn}^{(0)}(b), 
\]
where
\[
 C_j^{(n, m)}=\frac{e^{{\rm i}\beta_{jn} b}}{( 2{\rm i}\beta_{jn})
( \alpha_n^2 + \beta_{1n}\beta_{2n}) (\alpha_m^2 +
\beta_{1m}\beta_{2m})}A_j^{(n,m)}.
\]

In order to obtain a truncated finite dimensional linear systems, the
cut-off 
\[
N_j=\left \lfloor\frac{\eta_j \Lambda} {2\pi } \right \rfloor
\]
is chosen such that $|\alpha_n| \leq \eta_j$ for all $|n| \leq N_j$, where
$\eta_j$ is given by \eqref{eq:eta}. In view of the definition of $\eta_j$, the
density $\rho_1$ of the elastic slab is crucial to the reconstruction
resolution, a bigger $\rho_1$ gives a higher resolution. Keeping only the
Fourier coefficients of the solution in $[-N_j, \, N_j]$, we obtain the
truncated equations 
\begin{equation}\label{slt}
C_j s_j=t_j,
\end{equation}
where $C_j$ is the $(2N_j+1) \times (2N_j+1)$ portion of $C_j^{(n, m)}$, 
and $s_j, t_j$ are $(2N_j+1)$ column vectors given by
\[
s_{j, m}=f_m,\quad t_{j, n}=  \varphi_{jn}(b)-\varphi_{jn}^{(0)}(b),\quad -N_j
\leq n, m\leq N_j.
\]
We observe from \eqref{tau} and \eqref{rvpk1} that when $|m| >N_j$ there could
have  exponentially amplified errors of $A_j^{(n,m)}$ due to the data noise.  
Therefore, the equations need to be regularized further by letting
$A_j^{(n,m)}=0$ if $|n-m| >N_j$. Let the solution of \eqref{slt}
 be given by 
\begin{equation}\label{pdi}
s_j=C_j^\dagger t_j,
\end{equation}
where $C_j^\dagger$ denote the Moore-Penrose pseudo-inverse of $C_j$. Finally,
the scattering surface function is reconstructed as follows: 
\begin{equation}\label{reconstruct}
f(x)=\mathrm {Re} \sum_{|m| \leq N_j} s_{j,m} e^{{\mathrm i} \alpha_m x}.
\end{equation}

\subsection{Nonlinear correction scheme}

In the previous subsection, an explicit reconstruction formula
\eqref{reconstruct} is given. It is effective for a sufficiently
small deformation parameter $\varepsilon$. For a relatively large
$\varepsilon$, it is necessary to develop a nonlinear correction scheme to
improve the accuracy of the reconstruction.

Firstly, we solve the linearized problem and compute \eqref{pdi} to obtain
$s_j$, which is denoted as $s_j^{[0]}$. Let $f_0$ be the reconstructed
surface function by using $s_j^{[0]}$ in \eqref{reconstruct}. Next we solve the
direct problem using $f_0$ as the surface function, and
evaluate the total field $\boldsymbol{u}$ at $y=a$ denoted by
$\boldsymbol{u}^{[f_0]}$. The data $\phi_j^{[f_0]}(x,a)$ is computed from
\eqref{phin} by using $\boldsymbol{u}^{[f_0]}$, which is then used to compute
$\tau_{jn}^{[f_0]}$ from \eqref{fvps}, \eqref{pfvpb}
and \eqref{tau}. We construct the coefficient matrices $C_j^{[f_0]}$ and
the right hand side vectors $t_j^{[f_0]}$ of \eqref{slt} using
$\tau_{jn}^{[f_0]}$. Now we have approximated equations:
\[
C_j^{[f_0]} s_{j}^{[0]}=t_j^{[f_0]}.
\]
Subtracting the above equation from \eqref{slt} yields
\[
C_j s_j=t_j+C_j^{[f_0]} s_j^{[0]}-t_j^{[f_0]},
\]
from which we compute the updated Fourier coefficients:
\[
 s_{j}^{[1]}=C_j^\dagger \left(t_j+C_j^{[f_0]} s_j^{[0]}-t_j^{[f_0]} \right). 
\]
Then the surface function is updated as follows
\[
f_1(x)={\rm Re} \sum_{|m| \leq N_j} s_{j,m}^{[1]} e^{{\rm i} \alpha_m x}.
\]
Repeating the above procedure gives the nonlinear correction scheme:
\begin{align*}
&s_{j}^{[l]}=C_j^\dagger \left(t_j+C_j^{[f_{l-1}]}
s_{j}^{[l-1]}-t_j^{[f_{l-1}]} \right),\\
&f_l(x)={\rm Re} \sum_{|m| \leq
N_j} s_{j,m}^{[l]} e^{{\rm i} \alpha_m x},  \quad l=1,\dots.
\end{align*}

Essentially the above nonlinear correction scheme is similar to Newtown's
method for solving non-linear equations. From the numerical experiments in the
next section, we only need few iterations to obtain accurate reconstructions
because good initial guesses are available from the reconstruction formula
\eqref{reconstruct} when solving the linearized equation.

\section{Numerical experiments}\label{sec:ne}

In this section, we present some numerical experiments to show the
effectiveness of the proposed method. We solve the direct scattering problem
\eqref{tf} to get the  synthetic  data of the displacement of the total field
$\boldsymbol{u}$ by using the finite element method with the perfectly matched
layer (PML) technique. Then the measured data is obtained by interpolating the
finite element solution with $500$ uniform grid on  $\Gamma_a$. In order to test
the robustness of  the proposed method, we add random noise to the data:
\[
\boldsymbol{u}_{\delta}(x_i,a)=\boldsymbol{u}(x_i,a)(1+\delta \boldsymbol
r_i),
\]
where $x_i=-\Lambda/2+i\Lambda/500, i=1, \dots, 500,$ $\boldsymbol r_i$ are
vectors whose two components are random numbers uniformly distributed on
$[-1,\,1]$, and $\delta$ is the noise level.

In our numerical experiments, the Lam\'{e} parameters $\mu, \lambda$ are taken
as $\lambda=2, \mu=1$. The density $\rho_0$ of the free space is $\rho_0=1$,
while the density of the elastic slab $\rho_1$ is chosen to be three different
numbers $\rho_1=1.0, \, 2.0$ and $4.0$ in order to compare the reconstruction
results. The noise level $\delta=2\%$. The angular frequency $\omega=2\pi
$. Thus the compressional wavenumber $\kappa_1=\pi $ and the shear 
wavenumber $\kappa_2 =2 \pi $, which indicate that $\lambda_1=2, \lambda_2
=1$, where $\lambda_1$ and $\lambda_2$ are the compressional wavelength  and the
shear wavelength, respectively. The bottom of the slab is positioned at $y = b
= 0.05\lambda_2 $ and the top of the slab is put at $y = a = 2.0\lambda_2
$. Hence the slab is put in the near-field regime while the
data is measured in the far-field regime.  The incident wave  is generated by
\eqref{if}. In all numerical examples, the deformation parameter is fixed at
$\varepsilon=0.01$. According to \eqref{reconstruct}, there are two possible
choices to obtain the reconstructed surface function $f$, which are
mathematically equivalent. Thus we always take $j=1$ in \eqref{slt} to compute
the Fourier coefficients and to reconstruct the surface.

Example 1. The exact surface profile function is given by
\[
g(x)=\frac{1}{5} \sin\left ( \frac{20\pi x}{31}\right)-\sin\left ( \frac{40\pi
x}{31}\right)+\sin\left ( \frac{60\pi x}{31}\right),
\]	
which is a periodic function with the period $\Lambda=3.1$. This is a simple
example as the surface function only contains a few Fourier modes.

Figure \ref{fig:ex1} shows the reconstructed surfaces (dashed line) against the
exact surface (solid line). Figure \ref{fig:ex1}(a), (b), and (c) plot the
reconstructed surfaces by using $\rho_1 = 1.0, 2.0, 4.0$, respectively. Clearly,
the reconstruction resolution is increased  with respect to $\rho_1$.  For
$\rho_1 = 1.0$, the slab is absent and the cut-off $N_1= 1$. Hence
only the zeroth and first Fourier modes may be reconstructed and the
resolution is at most one wavelength. More frequency modes are able to be
recovered and the resolution increases to the subwavelength regime by increasing
$\rho_1$. Using Figure \ref{fig:ex1}(c) as the initial guess, we adopt the
nonlinear correction scheme to improve the reconstruction accuracy. As shown in
Figure \ref{fig:ex1}(d), (e), and (f), the reconstruction is almost perfect
after 3 steps of the iteration, which indicates that the algorithm is effective
to improve the accuracy of the reconstruction.

\begin{figure}
 \centering
 \includegraphics[width=0.49\textwidth]{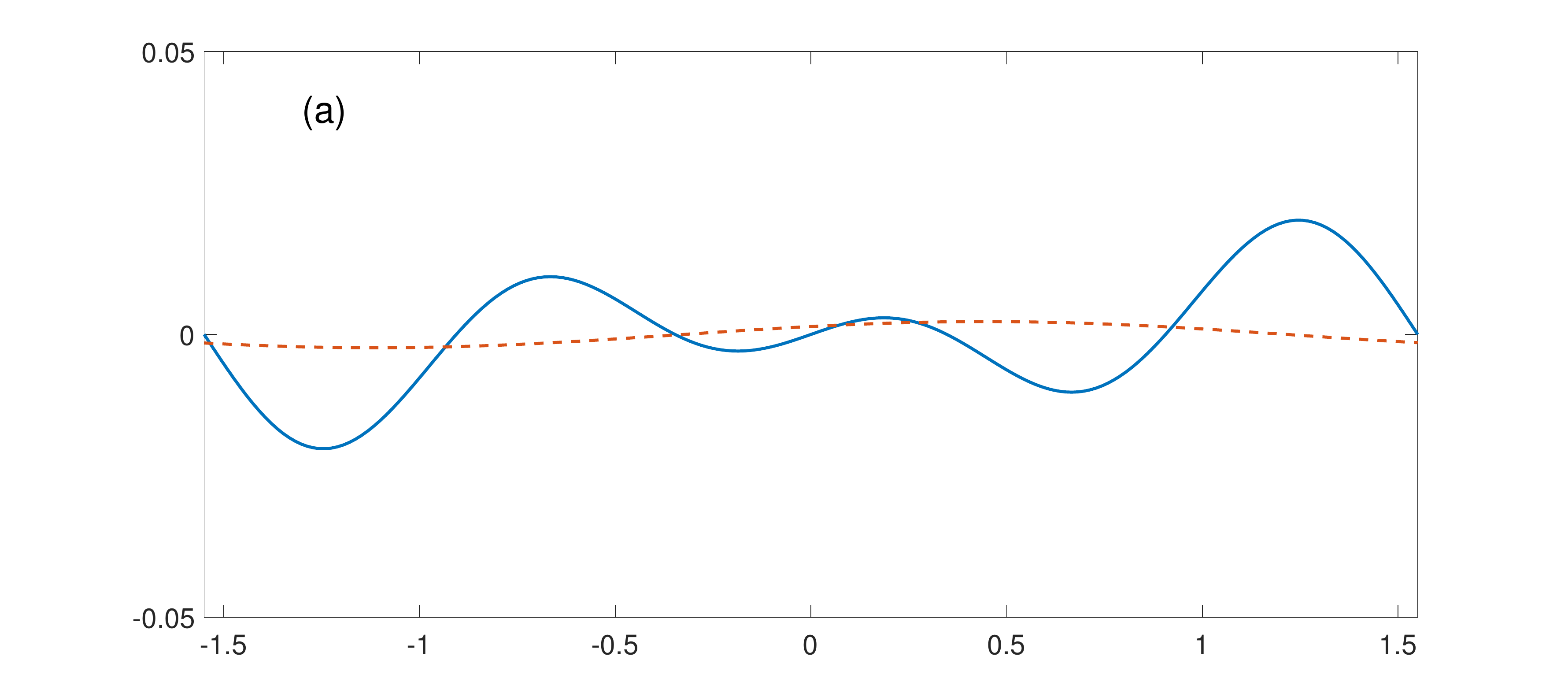}
 \includegraphics[width=0.49\textwidth]{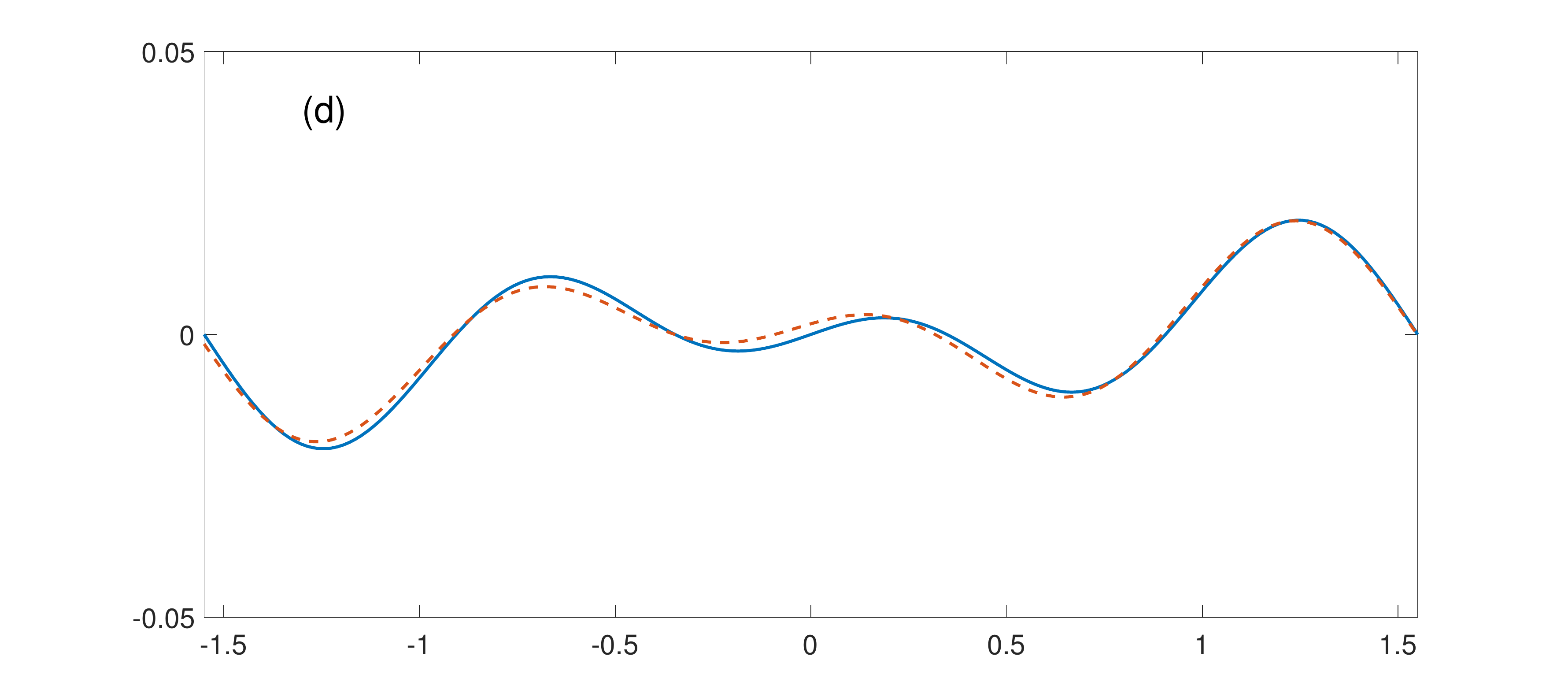}
 \includegraphics[width=0.49\textwidth]{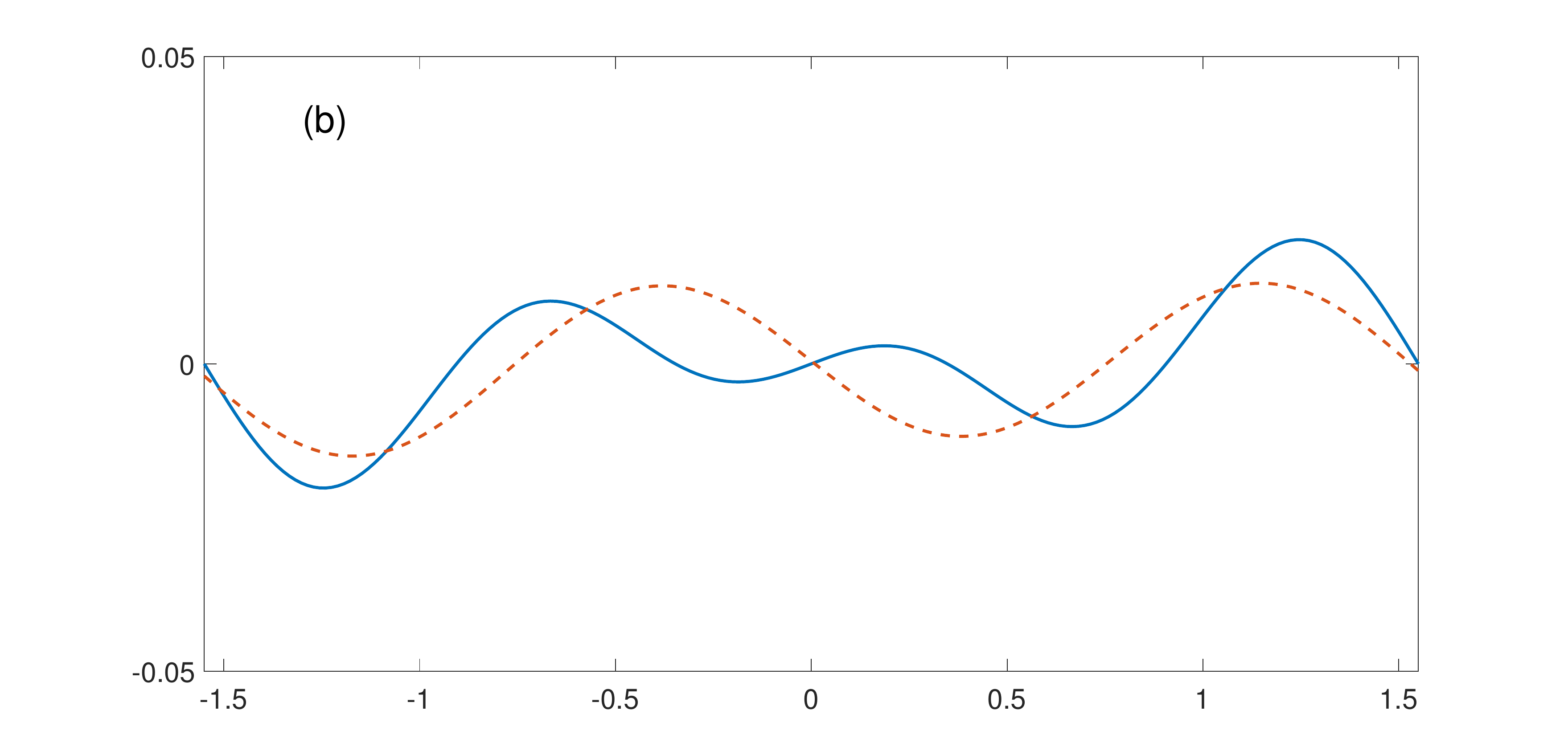}
 \includegraphics[width=0.49\textwidth]{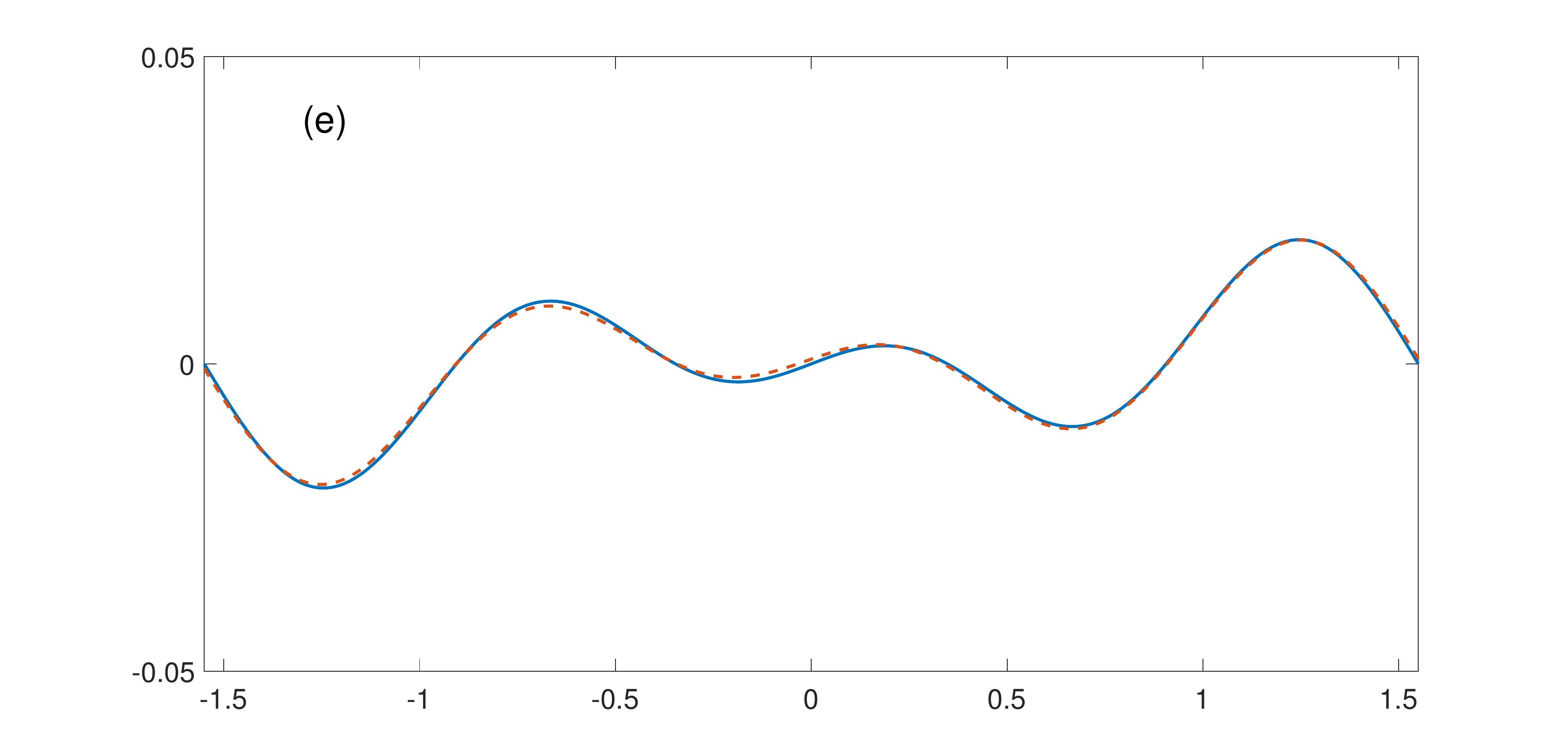}
 \includegraphics[width=0.49\textwidth]{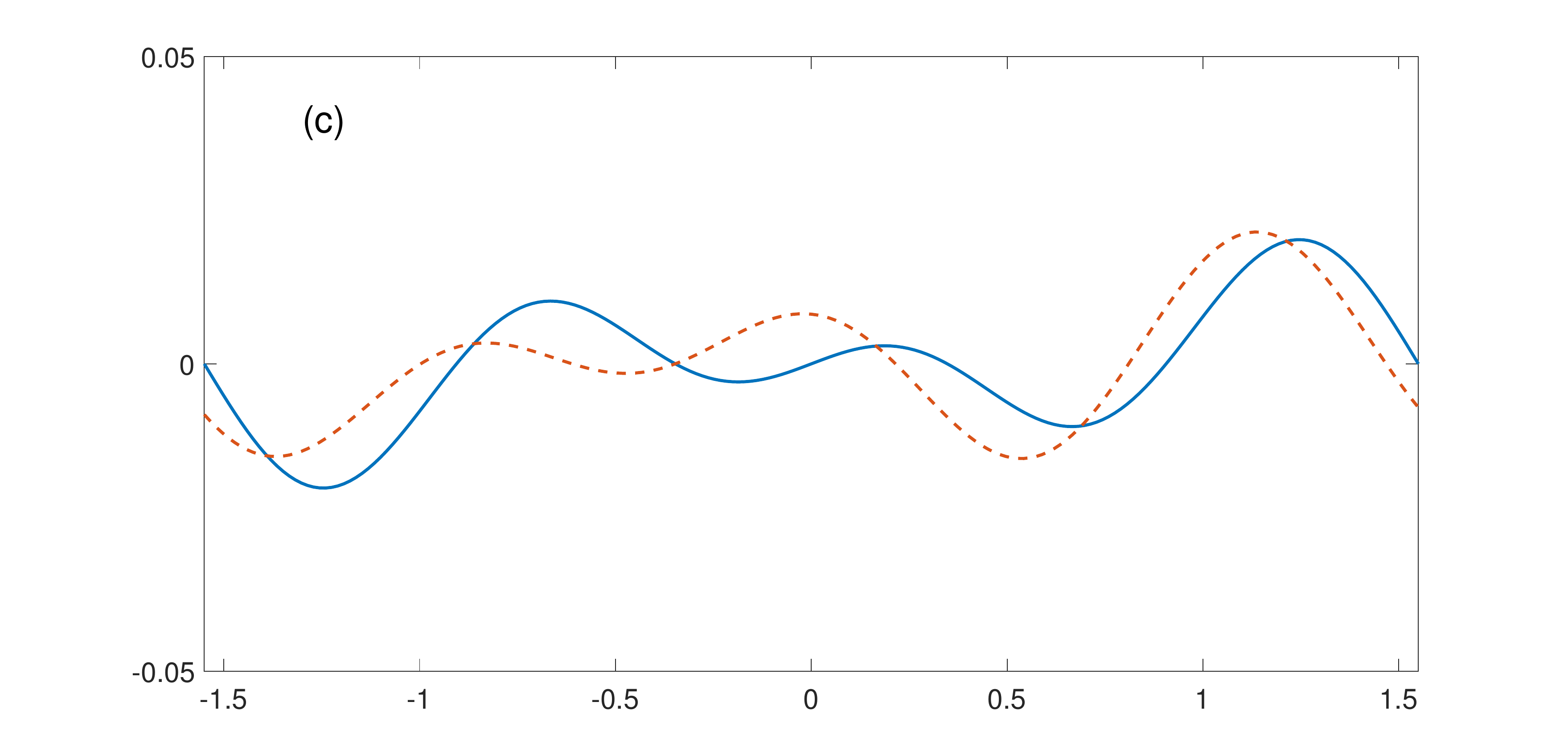}
 \includegraphics[width=0.49\textwidth]{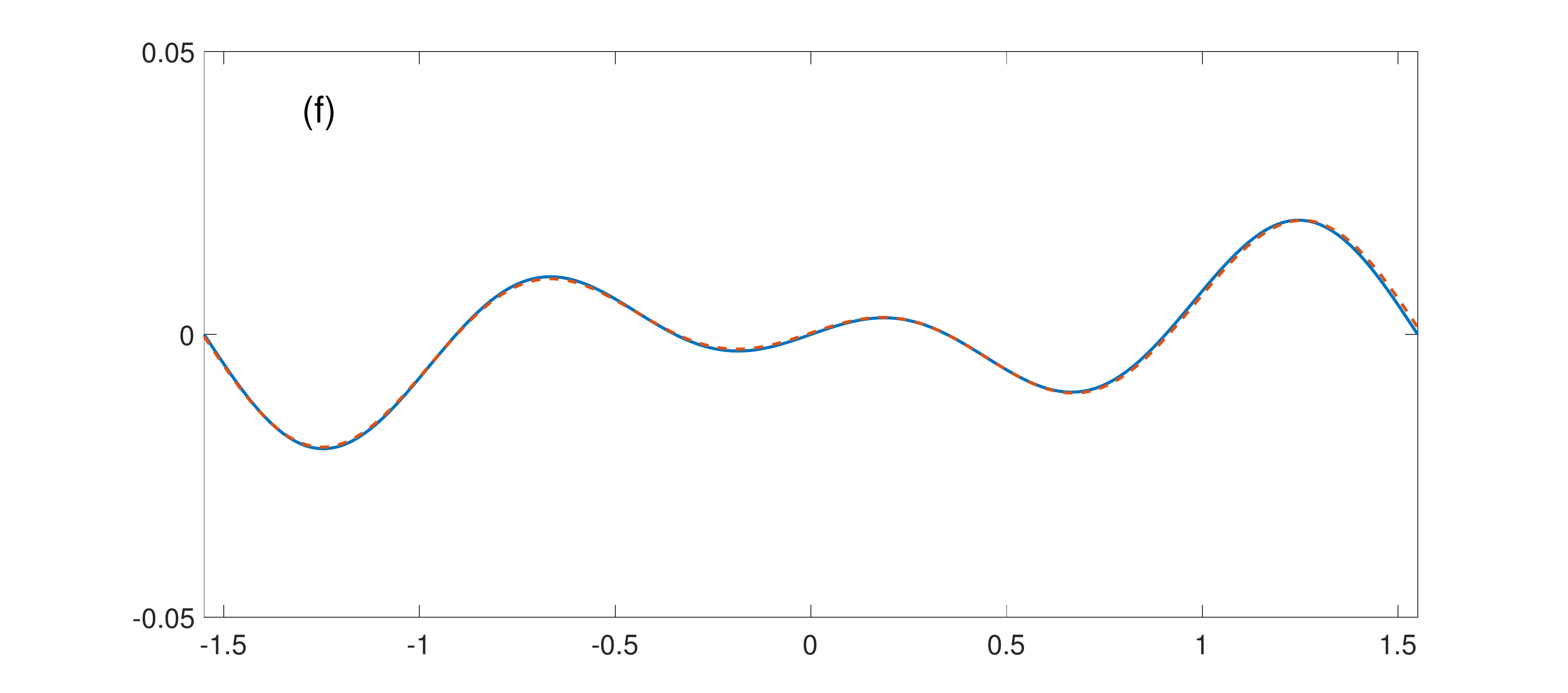}
 \caption{Example 1: the reconstructed surface (dashed line) is plotted against
the exact surface (solid line). (a) $\rho_1=1$; (b) $\rho_1=2$; (c) $\rho_1=4$;
(d) 1 step of nonlinear correction when $\rho_1=4$; (e) 2 steps of nonlinear
correction when $\rho_1=4$; (f) 3 steps of nonlinear correction when
$\rho_1=4$.}
\label{fig:ex1}
\end{figure}

Example 2. Consider the following surface profile function in the interval
$[-1, 1]$:
\[
g(x)=\begin{cases}
1- \cos (2\pi x) ,\quad& -1 \leq x< 0, \\
0.5-0.5 \cos (2\pi x),\quad& 0 < x \leq 1.\\
    \end{cases}
\]
The period $\Lambda=2$. Although this function is continuous, it is not
smooth since the first derivative is not continuous at $x=0$. Figure
\eqref{fig:ex2} shows the reconstructed surface (dashed line) against the exact
surface (solid line) for different density $\rho_1$ and the first three
steps of the nonlinear correction. The similar conclusions can be drawn
as those for Example 1: the density $\rho_1$ helps the resolution and the
nonlinear correction improve the reconstruction.

\begin{figure}
 \centering
 \includegraphics[width=0.3\textwidth]{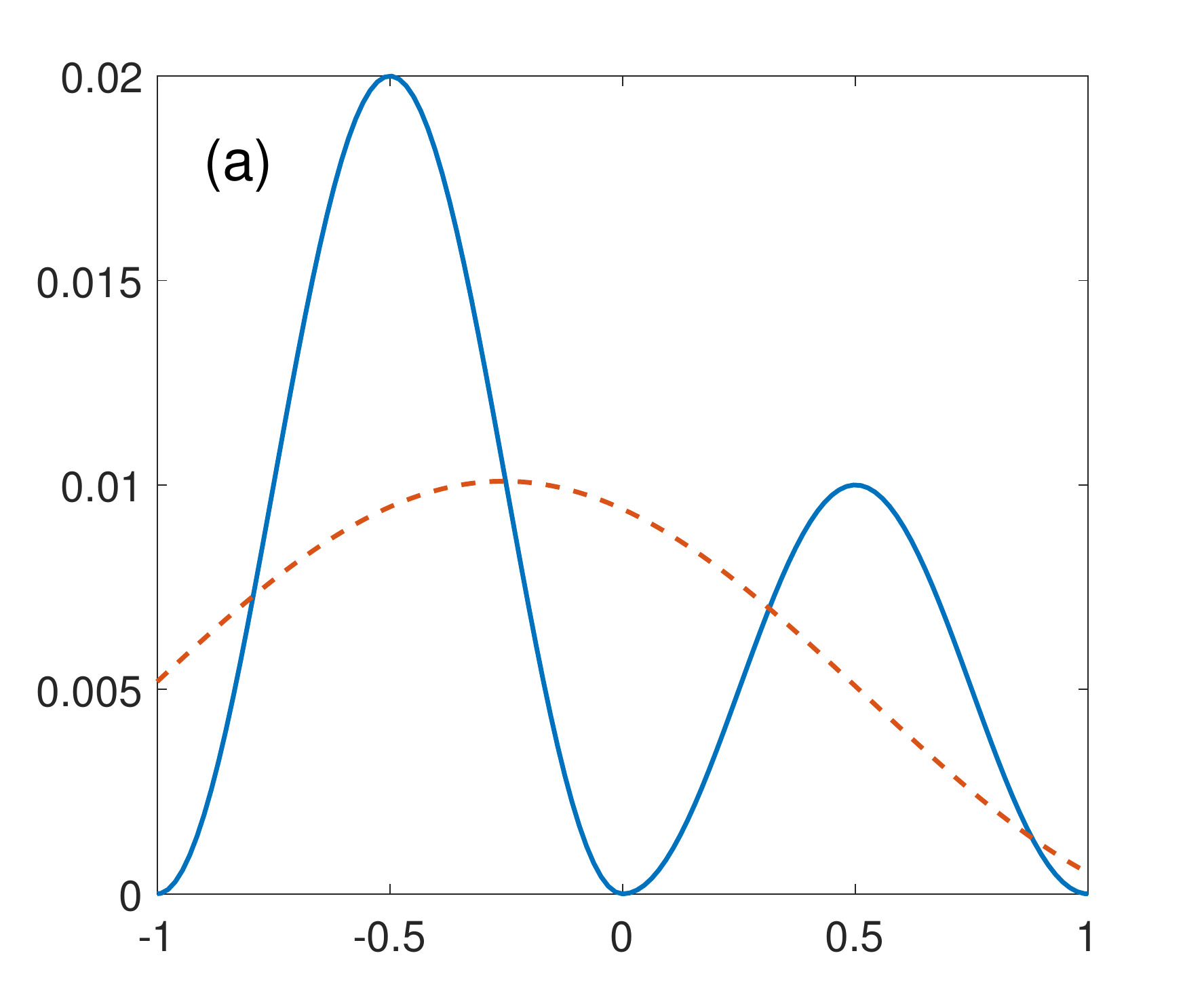}
 \includegraphics[width=0.3\textwidth]{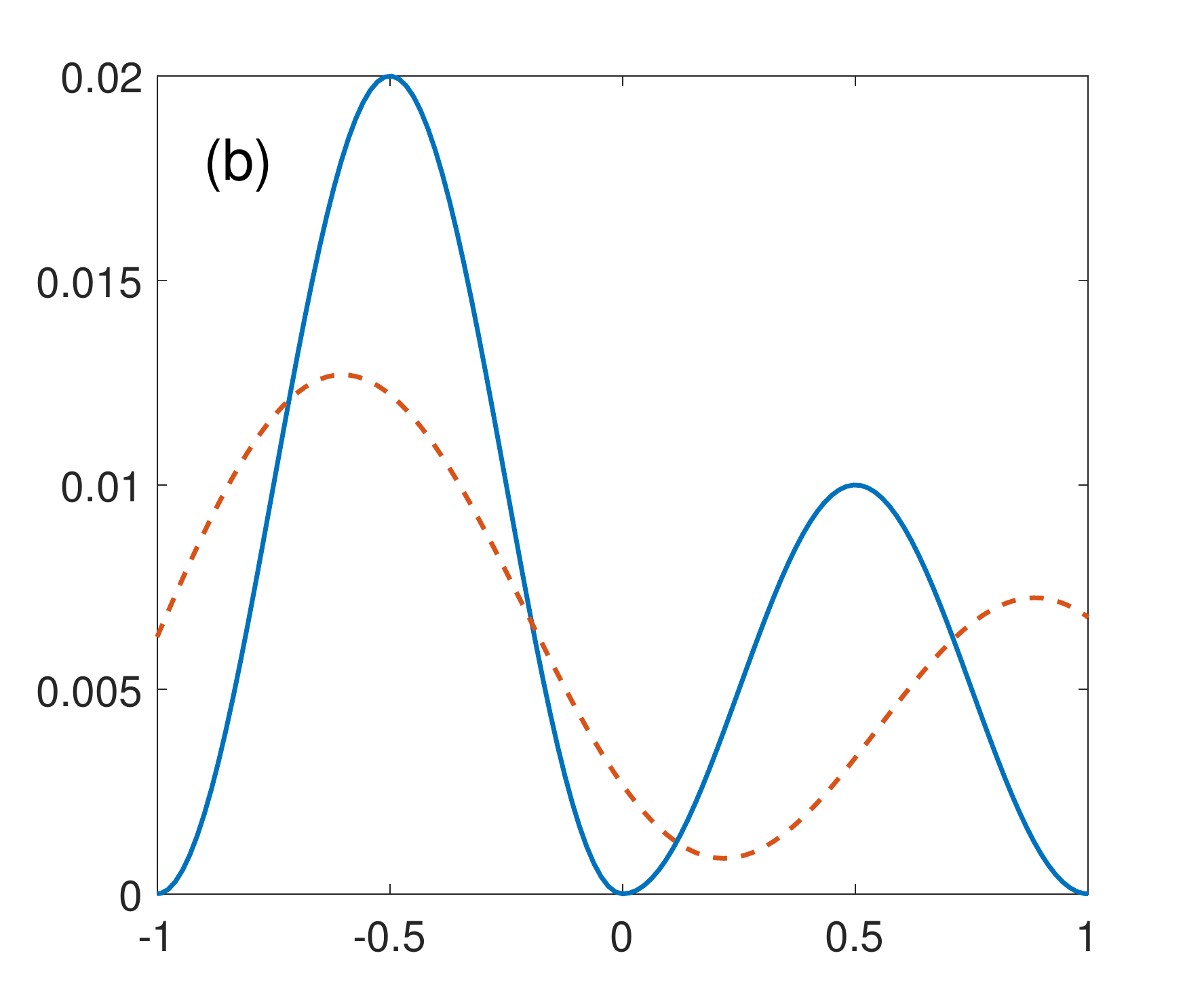}
  \includegraphics[width=0.3\textwidth]{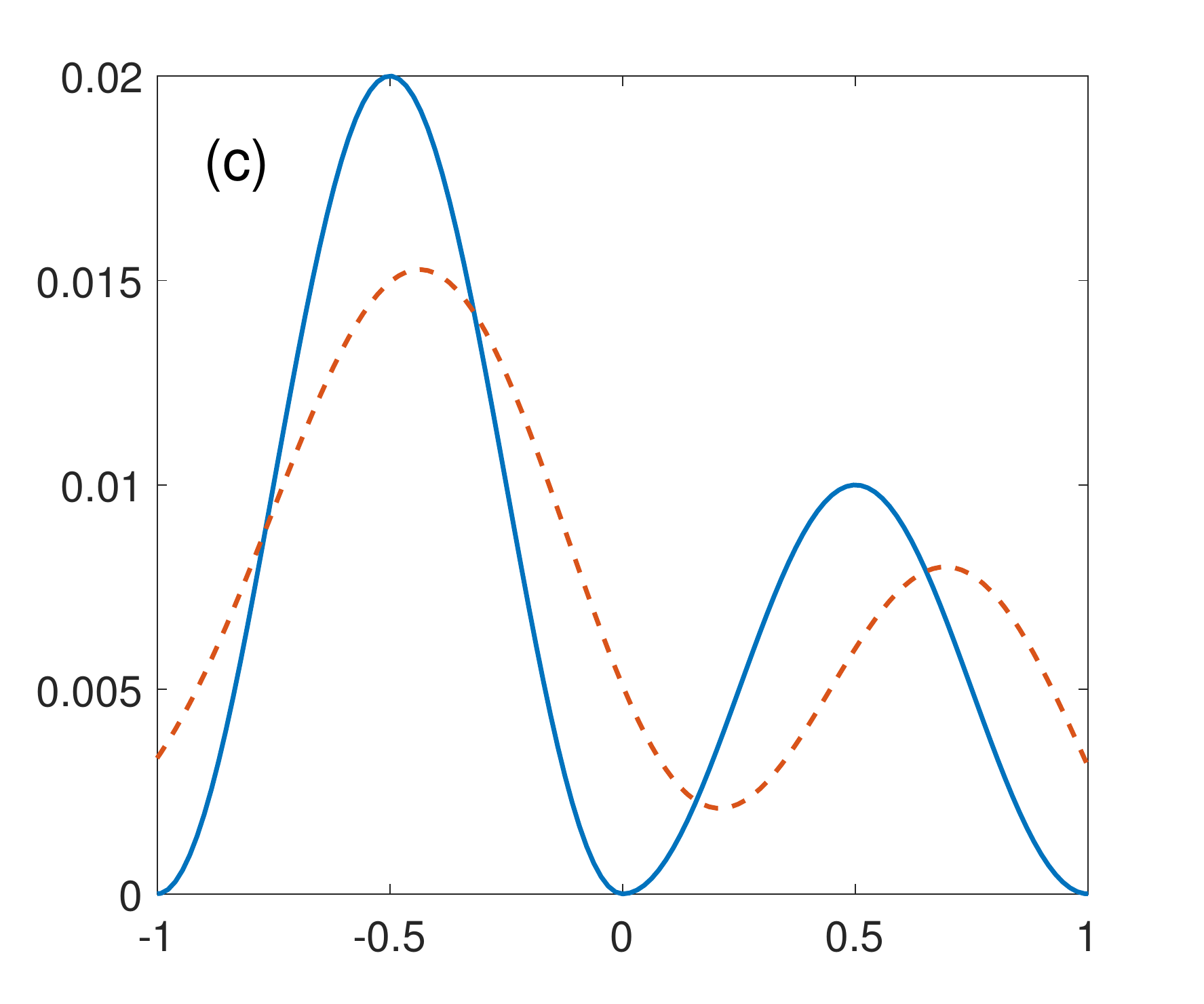}
  \includegraphics[width=0.3\textwidth]{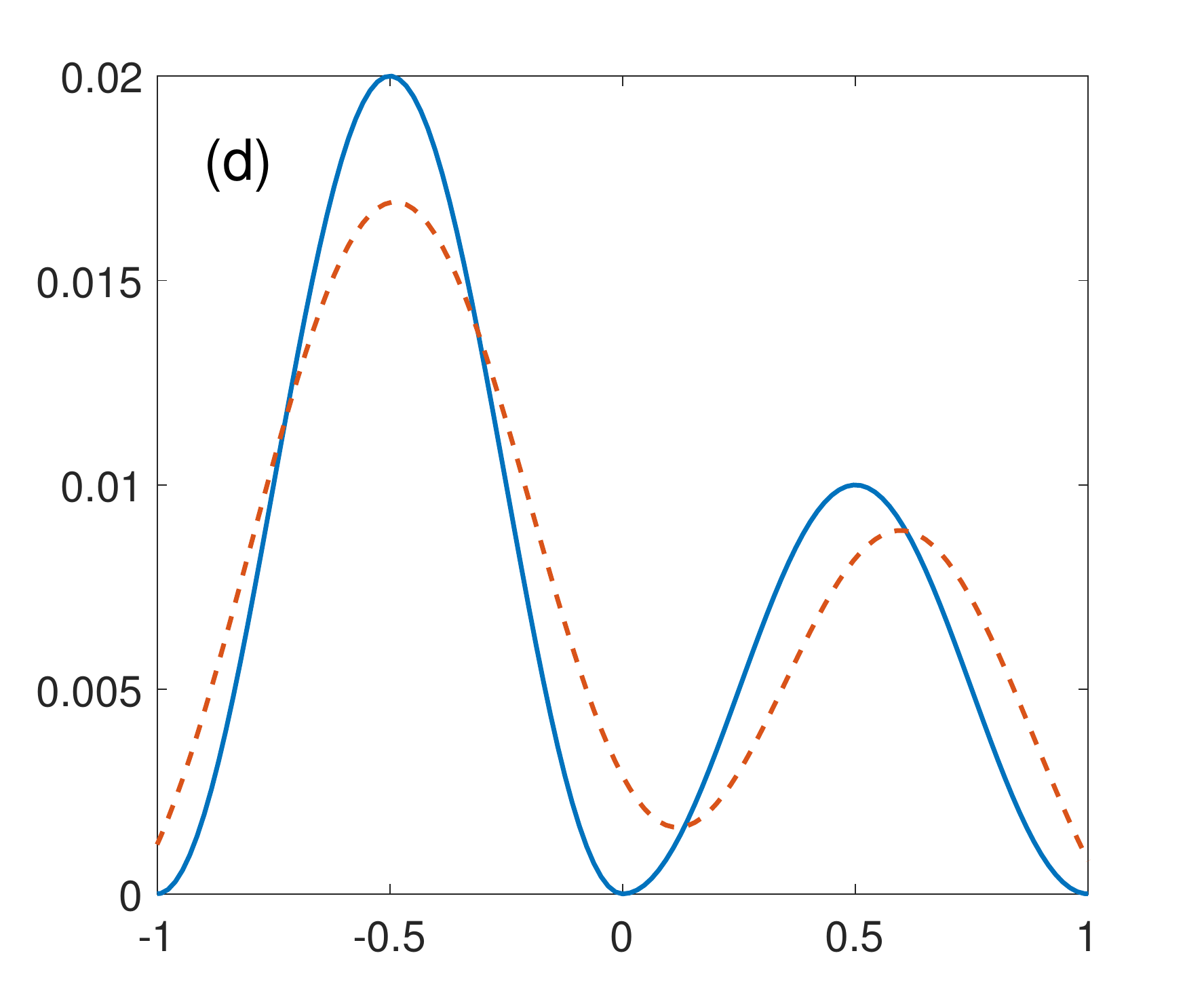}
   \includegraphics[width=0.3\textwidth]{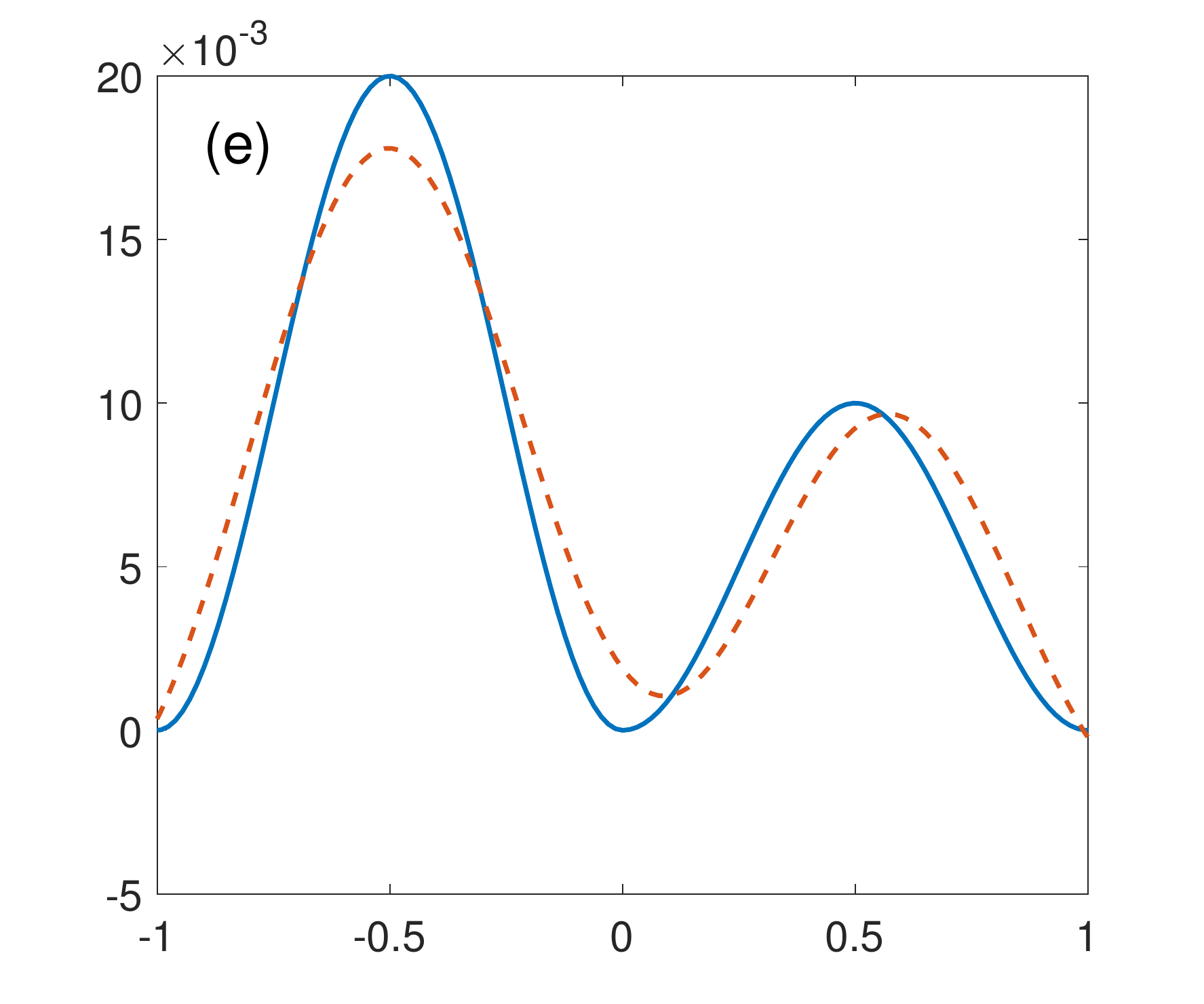}
 \includegraphics[width=0.3\textwidth]{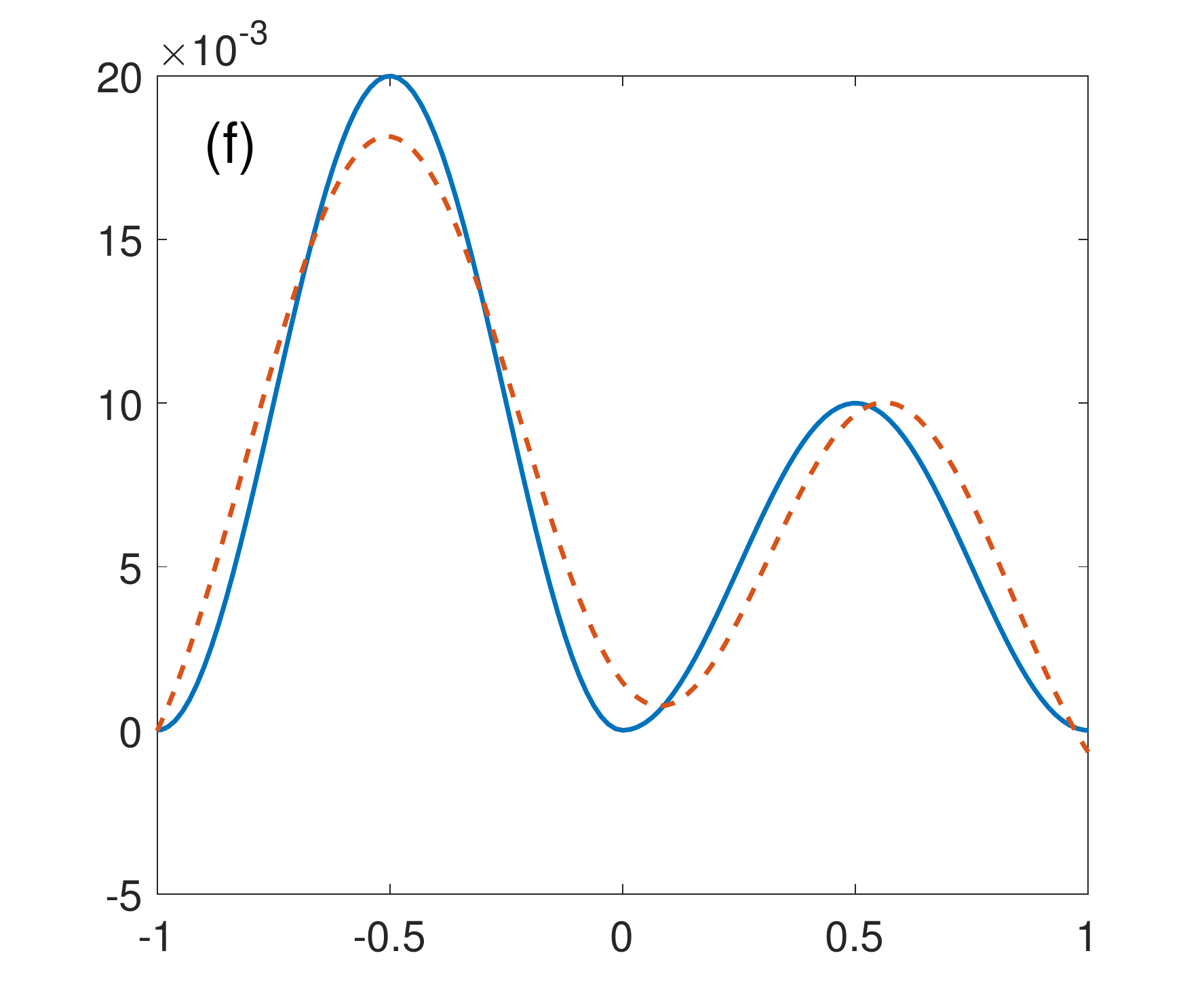}
 \caption{Example 2: the reconstructed surface (dashed line) is plotted against
the exact surface (solid line). (a) $\rho_1=1$; (b) $\rho_1=2$; (c) $\rho_1=4$;
(d) 1 step of nonlinear correction when $\rho_1=4$; (e) 2 steps of nonlinear
correction when $\rho_1=4$; (f) 3 steps of nonlinear correction when
$\rho_1=4$.}
\label{fig:ex2}
\end{figure}

\section{Conclusion}\label{sec:con}

In this paper, we have proposed an effective mathematical model and developed an
efficient numerical method to solve the inverse elastic surface scattering
problem by using the far-field data. The key idea is to utilize a slab with
larger density to allow more propagating modes to propagate to the far-field
zone, which contributes to the reconstruction resolution. The nonlinear
correction improves the accuracy by using the initial guess generated from the
explicit reconstruction formula. Results show that the proposed method is
robust to the data noise. The proposed approach can be extended to
bi-periodic structures where the three-dimensional Maxwell and elastic
equations should be considered. We are investigating these equations and will
report the progress elsewhere.

\appendix

\section{second order equations}

Consider the final value problem of the second order equation in the interval
$(b, a)$:
\begin{subequations}\label{Afvp}
\begin{align}
\label{Afvp1}  u'' + \eta^2 u = 0, \quad & b < y < a,\\
\label{Afvp2}  u = p, \quad & y=a, \\
\label{Afvp3}  u'-{\rm i}\beta u = q, \quad & y =a,
\end{align}
\end{subequations}
where $0\neq \eta, \beta, p, q$ are constants.  

\begin{lemm}\label{A1}
The final value problem (\ref{Afvp}) has a unique solution which is given by
\[
u(y)=\left(\frac{(\eta+\beta)p-{\rm i}q}{2\eta}\right)e^{-{\rm
i}\eta(a-y)} +\left(\frac{(\eta-\beta)p+{\rm
i}q}{2\eta}\right)e^{{\rm i}\eta(a-y)}.
\]
\end{lemm}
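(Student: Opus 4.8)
The plan is to solve the ODE \eqref{Afvp1} by writing its general solution as a linear combination of two exponential basis functions, and then fix the two free constants using the two conditions \eqref{Afvp2}--\eqref{Afvp3} prescribed at the single endpoint $y=a$. Because these are Cauchy data (the value and a combination of the derivative at the same point $y=a$), existence and uniqueness are automatic from the standard theory of linear second order ODEs, so the only real content is deriving the explicit closed form stated in the lemma.

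Concretely, I would first note that the characteristic equation of $u''+\eta^2 u=0$ has roots $\pm{\rm i}\eta$, and since the answer is centered at $y=a$, it is convenient to take the fundamental solutions $e^{{\rm i}\eta(a-y)}$ and $e^{-{\rm i}\eta(a-y)}$ rather than $e^{\pm{\rm i}\eta y}$. Thus I write
\begin{equation*}
u(y)=A\,e^{-{\rm i}\eta(a-y)}+B\,e^{{\rm i}\eta(a-y)},
\end{equation*}
which automatically satisfies \eqref{Afvp1} for any constants $A,B$. Differentiating gives
\begin{equation*}
u'(y)={\rm i}\eta A\,e^{-{\rm i}\eta(a-y)}-{\rm i}\eta B\,e^{{\rm i}\eta(a-y)}.
\end{equation*}

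Next I would evaluate at $y=a$, where both exponentials equal $1$. The condition \eqref{Afvp2} gives $A+B=p$, and the condition \eqref{Afvp3}, using $u'(a)={\rm i}\eta(A-B)$, gives ${\rm i}\eta(A-B)-{\rm i}\beta(A+B)=q$. This is a $2\times2$ linear system in $A,B$; solving it (for instance by adding and subtracting, or by Cramer's rule, which is legitimate since $\eta\neq0$ forces the coefficient determinant $2{\rm i}\eta\neq0$) yields
\begin{equation*}
A=\frac{(\eta+\beta)p-{\rm i}q}{2\eta},\qquad B=\frac{(\eta-\beta)p+{\rm i}q}{2\eta}.
\end{equation*}
Substituting $A,B$ back into the ansatz reproduces exactly the claimed formula, and the nonvanishing determinant $2{\rm i}\eta$ simultaneously establishes uniqueness.

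There is no genuine obstacle here: the problem is a constant coefficient linear ODE with two conditions imposed at the same endpoint, so it is a well posed initial value problem in disguise and the computation is entirely routine. The only point deserving a moment's care is the bookkeeping of the factor ${\rm i}\eta$ and the sign conventions when rearranging ${\rm i}\eta(A-B)-{\rm i}\beta(A+B)=q$ into the solved expressions for $A$ and $B$; getting the $\eta\pm\beta$ combinations and the $\mp{\rm i}q$ signs correct is where an arithmetic slip could creep in, so I would verify the final $u(y)$ by directly checking that it satisfies \eqref{Afvp2} and \eqref{Afvp3} at $y=a$ as a sanity check.
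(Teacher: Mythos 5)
Your proof is correct and follows essentially the same route as the paper: write the general solution of $u''+\eta^2u=0$ as a combination of two exponentials and determine the two constants from the Cauchy data at $y=a$ (the paper uses the basis $e^{\pm{\rm i}\eta y}$ and absorbs the factors $e^{\mp{\rm i}\eta a}$ into the constants, which is the same computation after the trivial change of basis $A=c_1e^{{\rm i}\eta a}$, $B=c_2e^{-{\rm i}\eta a}$). Your explicit values of $A$ and $B$ check out, so no issues.
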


\begin{proof}
The general solution of the homogeneous second order equation \eqref{Afvp1} is
\[
  u(y)=c_1 e^{{\rm i}\eta y} + c_2 e^{-{\rm i}\eta y},
\]
 where $c_1$ and $c_2$ are constant coefficients to be determined. It follows
from the final conditions \eqref{Afvp2}--\eqref{Afvp3} that 
\[
u=p, \quad u'={\rm i}\beta p + q, \quad y=a.
\]
Plugging the final values of $u$ and $u'$ into the general solution, we obtain 
\[
 c_1 = \left(\frac{(\eta+\beta)p-{\rm i}q}{2\eta}\right)e^{-{\rm
i}\eta a},\quad
 c_2 = \left(\frac{(\eta-\beta)p+{\rm i}q}{2\eta}\right)e^{{\rm
i}\eta a},
\]
which completes the proof.
\end{proof}

Consider the two-point boundary value problem of the second order equation in
the interval $(0, h)$:
\begin{subequations}\label{Abvp}
\begin{align}
\label{Abvp1}  u'' + \beta^2 u  = v, \quad & 0 < y < h,\\
\label{Abvp2}  u' = r, \quad & y=0, \\
\label{Abvp3}  u' - {\rm i} \beta u = s, \quad & y =h,
\end{align}
\end{subequations}
where $0\neq\beta, r, s$ are constants.

\begin{lemm}\label{A2}
The two-point boundary value problem \eqref{Abvp} has a unique solution which
is given by 
\[
 u(y)=K_1(y; \beta)r-K_2(y; \beta)s+\int_0^h K_3(y, z; \beta)v(z){\rm d}z,
\]
where
\[
 K_1(y; \beta)=\frac{e^{{\rm i}\beta y}}{{\rm i}\beta},\quad
K_2(y; \beta)=\frac{e^{{\rm i}\beta h}}{2{\rm i}\beta}(e^{{\rm i}\beta
y}+e^{-{\rm i}\beta y}),
\]
and
\[
 K_3(y, z; \beta)=\begin{cases}
            \frac{e^{{\rm i}\beta y}}{2{\rm i}\beta}(e^{{\rm i}\beta z}+e^{-{\rm
i}\beta z}),\quad z<y,\\[5pt]
 \frac{e^{{\rm i}\beta z}}{2{\rm i}\beta}(e^{{\rm i}\beta y}+e^{-{\rm
i}\beta y}),\quad z>y.
           \end{cases}
\]
\end{lemm}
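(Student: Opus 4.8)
The plan is to solve \eqref{Abvp} by superposition, exploiting its linearity in the three data $r$, $s$, and $v$. I would write the candidate solution as $u=u_{\rm h}+u_{\rm p}$, where $u_{\rm h}(y)=K_1(y;\beta)r-K_2(y;\beta)s$ absorbs the boundary data and $u_{\rm p}(y)=\int_0^h K_3(y,z;\beta)v(z)\,{\rm d}z$ absorbs the source. The homogeneous part is handled exactly as in Lemma \ref{A1}: the general solution of \eqref{Abvp1} with $v=0$ is $c_1 e^{{\rm i}\beta y}+c_2 e^{-{\rm i}\beta y}$, and imposing \eqref{Abvp2}--\eqref{Abvp3} gives a $2\times 2$ system for $(c_1,c_2)$ whose solution reproduces $K_1 r-K_2 s$. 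Equivalently and more quickly, I would check directly that $K_1$ and $K_2$ solve the homogeneous equation and carry precisely the right data: since $K_1\propto e^{{\rm i}\beta y}$ it already satisfies $(\partial_y-{\rm i}\beta)K_1=0$ and hence contributes only to the datum at $y=0$, where $K_1'(0)=1$; since $K_2$ is a multiple of $e^{{\rm i}\beta y}+e^{-{\rm i}\beta y}$ it satisfies $K_2'(0)=0$ and contributes only at $y=h$, where a short computation gives $(\partial_y-{\rm i}\beta)K_2|_{y=h}=-1$.

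For the source term I would recognize $K_3(y,z;\beta)$ as the Green's function of $\partial_{yy}+\beta^2$ on $(0,h)$ subject to the homogeneous boundary conditions $u'(0)=0$ and $(u'-{\rm i}\beta u)(h)=0$. It is assembled in the usual way from the two distinguished homogeneous solutions $u_1(y)=e^{{\rm i}\beta y}+e^{-{\rm i}\beta y}$ (satisfying the left condition) and $u_2(y)=e^{{\rm i}\beta y}$ (satisfying the right condition), the first controlling the smaller and the second the larger of $y$ and $z$; the prefactor $1/(2{\rm i}\beta)$ is the reciprocal of their Wronskian, which a one-line computation shows equals $2{\rm i}\beta$. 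To confirm that $u_{\rm p}$ solves $u_{\rm p}''+\beta^2u_{\rm p}=v$, I would split the integral at $z=y$, differentiate twice with the Leibniz rule, and track the endpoint contributions: continuity of $K_3$ across $z=y$ cancels the terms arising in the first derivative, while the unit jump $\partial_y K_3(y,z;\beta)|_{z=y^-}-\partial_y K_3(y,z;\beta)|_{z=y^+}=1$ produces exactly $v(y)$ upon the second differentiation, the remaining integrals contributing $-\beta^2 u_{\rm p}$ since each branch of $K_3$ is itself a homogeneous solution in $y$. Because the $y$-dependence of the $z<y$ branch is $e^{{\rm i}\beta y}=u_2$ and that of the $z>y$ branch is $u_1$, the integral $u_{\rm p}$ automatically satisfies both homogeneous boundary conditions, so all of the data $r$, $s$ are carried by $u_{\rm h}$.

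Finally, uniqueness is immediate: for the homogeneous problem $r=s=v=0$ the solution $c_1 e^{{\rm i}\beta y}+c_2 e^{-{\rm i}\beta y}$ must satisfy $u'(0)=0$, forcing $c_1=c_2$, and then $(u'-{\rm i}\beta u)(h)=-2{\rm i}\beta c_2 e^{-{\rm i}\beta h}=0$ forces $c_2=0$ because $\beta\neq0$, so $u\equiv0$. I expect the only delicate step to be the Green's function bookkeeping --- correctly orienting the jump in $\partial_y K_3$ at $z=y$ and matching it against the Wronskian normalization $1/(2{\rm i}\beta)$ --- since a sign or placement error there would spoil either the reproduction of the source or the homogeneity of $u_{\rm p}$ at the boundary. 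The remaining steps are routine differentiation and the inversion of a $2\times 2$ system.
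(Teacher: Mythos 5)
Your argument is correct, and every computation you flag checks out: $K_1'(0)=1$ with $(\partial_y-{\rm i}\beta)K_1|_{y=h}=0$, $K_2'(0)=0$ with $(\partial_y-{\rm i}\beta)K_2|_{y=h}=-1$, the Wronskian of $e^{{\rm i}\beta y}+e^{-{\rm i}\beta y}$ and $e^{{\rm i}\beta y}$ is $2{\rm i}\beta$, the jump $\partial_yK_3|_{z=y^-}-\partial_yK_3|_{z=y^+}=1$ produces $+v(y)$, and the homogeneous problem forces $c_1=c_2=0$. Your route is, however, organized differently from the paper's. The paper proceeds constructively: it writes the general solution of the nonhomogeneous equation by variation of parameters with the fundamental pair $e^{\pm{\rm i}\beta y}$ (Wronskian $-2{\rm i}\beta$), imposes the two boundary conditions to get explicit formulas for the constants $c_1,c_2$, and then substitutes back, so the kernels $K_1,K_2,K_3$ emerge at the end rather than being posited; uniqueness is implicit in the unique solvability for $(c_1,c_2)$. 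You instead decompose $u=u_{\rm h}+u_{\rm p}$ by superposition, verify that each kernel carries exactly its own datum, identify $K_3$ as the Green's function $u_1(\min)u_2(\max)/W$ built from the two boundary-adapted homogeneous solutions, and supply uniqueness as a separate (and cleanly stated) step. The paper's derivation explains where the formula comes from and is the natural companion to Lemma \ref{A1}; your verification is shorter on the boundary-data side, makes the structure of $K_3$ transparent, and makes the uniqueness claim explicit, which the paper leaves to the reader. Either proof is acceptable; the only step of yours that genuinely requires care is the one you yourself single out, the orientation of the jump in $\partial_yK_3$, and you have it right.
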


\begin{proof}
A fundamental set of solutions for the second order
equation \eqref{Abvp1} is 
 \[
  u_1(y)=e^{{\rm i}\beta y},\quad u_2(y)=e^{-{\rm i}\beta y}. 
 \]
A simple calculation yields that the Wronskian $W(u_1, u_2)=-2{\rm
i}\beta$. It follows from the variation of parameters that the general
solution to the nonhomogeneous second order equation \eqref{Abvp1} is
\begin{equation}\label{A2-s1}
 u(y)=c_1 e^{{\rm i}\beta y}  + c_2 e^{-{\rm i}\beta y} +\frac{e^{{\rm
i}\beta y}}{2{\rm i}\beta} \int_0^y e^{-{\rm i}\beta z} v(z){\rm d}z
 -\frac{e^{-{\rm i}\beta y}}{2{\rm i}\beta} \int_0^y e^{{\rm
i}\beta z} v(z){\rm d}z,
\end{equation}
where $c_1$ and $c_2$ are undetermined constants. 

Taking the derivative of \eqref{A2-s1}, evaluating at $y=0$, and using the
boundary condition \eqref{Abvp2} give
\begin{equation}\label{A2-s2}
 u'(0)={\rm i}\beta (c_1-c_2)=r.
\end{equation}
It follows from the boundary condition \eqref{Abvp3} that 
\begin{equation}\label{A2-s3}
 c_2=\frac{1}{2{\rm i}\beta}\left(\int_0^h e^{{\rm i}\beta z}v(z){\rm
d}z-se^{{\rm i}\beta h} \right).
\end{equation}
Combining \eqref{A2-s2} and \eqref{A2-s3} yields
\begin{equation}\label{A2-s4}
 c_1=c_2+\frac{r}{{\rm i}\beta}=\frac{1}{2{\rm i}\beta}\left(\int_0^h e^{{\rm
i}\beta z}v(z){\rm d}z-se^{{\rm i}\beta h} \right)+\frac{r}{{\rm i}\beta}.
\end{equation}
Substituting \eqref{A2-s3} and \eqref{A2-s4} into \eqref{A2-s1}, we obtain the
solution.
\end{proof}

\end{document}